\newcommand{\Le}{\mathcal{L}_{\varepsilon}}
\newcommand{\Lel}{\mathcal{L}_{\varepsilon,\lambda}}
\newtheorem{theorem}{Theorem}
\newtheorem{lemma}[theorem]{Lemma}
\theoremstyle{definition}
\theoremstyle{remark}
\newtheorem*{remark}{Remark}
\title[]{Uniformly accurate  splitting schemes for  the Benjamin--Bona--Mahony equation with dispersive parameter}
\author{María Cabrera Calvo}
\address{LJLL (UMR 7598), Sorbonne Universit\'e, UPMC, 4 place Jussieu, 75005, Paris, France (M. Cabrera Calvo)}
\email{cabreracalvo@ljll.math.upmc.fr}
\author{Katharina Schratz}
\address{LJLL (UMR 7598), Sorbonne Universit\'e, UPMC, 4 place Jussieu, 75005, Paris, France (K. Schratz)}
\email{katharina.schratz@ljll.math.upmc.fr}
\begin{document}
\begin{abstract}
We propose a new class of uniformly accurate splitting methods for the Benjamin--Bona-Mahony equation which converge uniformly in the dispersive parameter $\varepsilon$. 
The proposed splitting schemes are furthermore   asymptotic convergent  and preserve the  KdV limit. We carry out a rigorous convergence analysis of the splitting schemes  exploiting the smoothing properties in the system. This will allow us to establish  improved error bounds with gain  either in regularity (for non smooth solutions) or in the dispersive parameter $\varepsilon$. The latter will be  interesting in regimes of a small dispersive parameter. We will in particular show that in the classical BBM case $P(\partial_x) = \partial_x$ our Lie splitting   does not require any spatial regularity, i.e, first order time convergence holds in $H^{r}$ for solutions in $H^{r}$  without any loss of derivative. This estimate holds  uniformly in $\varepsilon$. In regularizing regimes $\varepsilon=\mathcal{O}(1) $ we even gain a  derivative with our time discretisation at the cost of loosing in terms of $\frac{1}{\varepsilon}$.    Numerical experiments underline our theoretical findings.
\end{abstract}

\maketitle

\section{Introduction}
We consider the Benjamin--Bona-Mahony  (BBM) equation, 
\begin{gather}
\label{BBM}
\partial_t u(t,x) + \frac{P(\partial_x)}{1-\varepsilon \partial^2_x} u(t,x)+ \varepsilon \frac{\partial_x}{1-\varepsilon \partial^2_x} u^2(t,x)  =0,
\end{gather}
also known as the regularized long-wave equation, which
describes the behaviour of shallow water waves in direction 1+1. Here, $P$  denotes a polynomial in $\partial_x$ which we will define below. 

In recent years the theoretical and numerical analysis of the BBM equation has gained a lot of attention, see for instance \cite{avrin_generalized_1987,avrin_global_1985,fetecau_approximate_2005,stanislavova_global_2005}  for wellposedness results and \cite{avilez-valente_high-order_2009,dutykh_finite_2013,gucuyenen_strang_2017}  for numerical approximation methods, as well as  \cite{besse_artificial_2018} for the numerical analysis of the linearised BBM equation.  Due to their easy practical implementation and efficiency, splitting methods  provide   a particular attractive class of methods  to approximate the time dynamics of \eqref{BBM}. For an extensive overview on splitting  methods we refer to \cite{hairer_geometric_2006,McLacQ02} and the references therein, as well as to \cite{H1,H2} for their analysis in context of the Korteweg--de Vries equation. Previously proposed splitting methods for the BBM equation are   so far, however,   restricted to the smooth setting $\varepsilon =1$ (see, e.g., \cite{gucuyenen_strang_2017}), where in the classical case $P(\partial_x) = \partial_x$ the BBM equation \eqref{BBM} reduces to a \emph{regularized  ordinary differential equation}. The latter holds true due to the regularisation of the leading operator
\begin{equation}\label{regi}
\frac{P(\partial_x)}{1- \partial^2_x} = \mathcal{O}\left(\partial_x^{-1}\right) \quad \text{for}\quad P(\partial_x) = \partial_x.
\end{equation}
 Let us also mention  finite volume schemes for BBM that were  recently introduced in \cite{dutykh_finite_2013}  along  with  extensive numerical experiments.  A rigorous convergence analysis is  up to our knowledge, however, still  lacking in the general $\varepsilon$-dependent nonlinear setting  \eqref{BBM} which is strongly  governed by the dispersive parameter $0<\varepsilon\leq 1 $.  The main difficulty   lies in   regimes of small dispersion parameter $
\varepsilon \ll 1 
$, where the nice regularisation property \eqref{regi} breaks down   and  instead turns into a loss of derivative. This drastic change of behaviour holds true due to the estimate
\begin{equation*}\label{regi2}
\left\Vert \frac{\partial_x }{1- \varepsilon \partial^2_x} f \right\Vert_r \leq \text{min}\left\{ \frac{1}{\varepsilon} \Vert f \Vert_{r-1},\Vert f \Vert_{r+1}\right\} .
\end{equation*}

The aim of this paper lies in the development and  convergence analysis of high order  splitting schemes that reproduce the dynamics of the  solution  $u(t,x)$ of the generalised BBM equation \eqref{BBM} from the smooth setting $\varepsilon = 1 $ up to the {limit} regime $
\varepsilon \to 0 
$.  We  construct high order  splitting methods that  converge uniformly in $\varepsilon$, see also, Bao et al. for uniformly accurate schemes for Klein--Gordon type equations (e.g., \cite{Bao,BFS}). More precisely, we will prove low regularity error estimates in case of non smooth solutions with uniform convergence in $\varepsilon$, as well as improved error estimates for smooth solutions with gain in $\varepsilon$. The latter is in particular interesting in regimes of small dispersive parameter $\varepsilon$. Our main convergence result reads as follows: At order $p=1,2,3,4$  our splitting method of order $p$ satisfies the global error bound
\begin{gather}\label{result}
\|u(t_n)-u^n\|_r \leq \varepsilon^{1-\sigma} \tau^p c\left(\sup_{0\leq t \leq t_n}\|u(t)\|_{r+1+p\lambda -2\sigma}\right)\quad \text{with}\quad 0\leq \sigma \leq 1,
\end{gather}
where $\lambda$ denotes the degree of the leading differential operator $P(\partial_x)$. In the classical case $P(\partial_x)=\partial_x$ such that $\lambda = 1$ we in particular observe that no additional regularity of the solution is needed in our Lie splitting approximation (i.e., $p=1$), if choosing $\sigma =1$, at the cost of no longer gaining in terms of $\varepsilon$. Low regularity integrators   for other nonlinear dispersive equations such as  nonlinear Schr\"odinger and Korteweg--de Vries equations  were recently  introduced in \cite{ORS2,ORS1,RS1,RS2}.

Our convergence result \eqref{result} holds uniformly in  $\varepsilon$, see also  Figure \ref{EOC_plot}. In the regularizing regime $\varepsilon=\mathcal{O}(1) $ one can in addition show that thanks to the smoothing property \eqref{regi} we gain smoothness at the cost of loosing in terms of $\frac{1}{\varepsilon}$. More precisely, the following regularized convergence holds true for our Lie splitting method
\begin{gather}\label{resultSmooth}
\|u(t_n)-u^n\|_r \leq \tau \frac{1}{\varepsilon} c\left(\sup_{0\leq t \leq t_n}\|u(t)\|_{r-2+\lambda}\right) .
\end{gather}
This result is, in particular in the classical setting $P(\partial_x)=\partial_x$ (that is $\lambda = 1$),  interesting from a theoretical point of you as we gain in regularity with our Lie discretisation. Note that in the latter case first order time convergence holds in $H^r$ for solutions in $H^{r-1}$. However, in practical computations  one needs to couple the Lie time discretisation with a suitable spatial discretisation which will again require some smoothness of the initial data.

Our uniformly accurate splitting schemes for BBM furthermore allow us to capture the limit regime 
where  the BBM equation in the classical case $ P(\partial_x) = \partial_x$  collapses to the Korteweg--de Vries (KdV) equation (see, e.g.,  \cite{besse_artificial_2018})
\begin{equation}\label{kdvapp}
\partial_t u_{\tiny\text{KdV}} + \partial_x u_{\tiny\text{KdV}} +\varepsilon  \partial_x u_{\tiny\text{KdV}}^2 + \varepsilon \partial_x^3 u_{\tiny\text{KdV}} =0.
\end{equation}

In the error analysis of the splitting schemes we will  heavily exploit the structure of the operator
\begin{equation}\label{Le}
 \mathcal{L}_{\varepsilon} = \frac{  \partial_x}{1-\varepsilon \partial^2_x}
\end{equation}
and its  smoothing property which strongly depends   on the dispersive parameter $\varepsilon$. Note that for $\varepsilon =\mathcal{O}(1)$  we gain  regularization thanks to the observation that $ \mathcal{L}_{\varepsilon}=\mathcal{O}\left(\frac{1}{\varepsilon}\partial_x^{-1}\right) $, whereas for $\varepsilon \to 0 $ we loose a full derivative due to the limit behaviour $ \mathcal{L}_{\varepsilon=0}=\mathcal{O}\left(\partial_x\right) $. Interpolating this gain in regularity and loss in $\varepsilon$   will allow us to establish the improved global error estimates \eqref{result}.  We will focus on methods up to order four. However, our construction and analysis can be extended to arbitrary high order.


\noindent{\bf Outline of the paper.}
In Section \ref{sec:main} we introduce the general framework. Then we will discuss the Lie, Strang and higher order splitting method in Sections \ref{sec:Lie} to Section \ref{sec:four}. In each section we will develop a uniformly accurate splitting scheme up to the desired order and carry out its global  error analysis. We will prove in each section the global error estimate \eqref{result} for the particular value of   $1\leq p \leq 4$. Numerical experiments in Section \ref{sec:num} underline our theoretical findings.

\noindent{\bf Notation and assumptions.}
 For practical implementation issues we will impose periodic boundary conditions that is  $x \in \mathbb{T} =[-\pi ,\pi]$ and assume that  $P$ is a polynomial of degree $\lambda\geq 1$ such that $\mathrm{Re}\,P(ix)= 0$, for all $x\in\mathbb{R}$. We will fix $r>\frac{1}{2}$ and further set for $\varepsilon\geq 0$
\begin{gather*}
\mathcal{L}_{\varepsilon,\lambda} = \frac{P(\partial_x)}{1-\varepsilon \partial^2_x}.
\end{gather*}
We denote by $\|.\|_r$ the standard $H^r(\mathbb{T})$ norm, where in particular, for this choice of $r$, the standard bilinear estimate
\begin{gather}
\label{bil_est}
\|fg\|_r \leq C_r \|f\|_r \|g\|_r
\end{gather}
holds for all $f,g\in H^r(\mathbb{T})$ and some constant $C_r>0$.

\section{General splitting framework}\label{sec:main}
In this section we present the general framework of this paper. The key idea is the following: instead of solving the full problem \eqref{BBM}, we split the BBM equation \eqref{BBM} into the linear and nonlinear subproblem
\begin{align}
\partial_t w(t,x) &= - \varepsilon \Le w^2(t,x), \tag{S1}\label{S1} \\
\partial_t v(t,x) &= - \Lel v(t,x). \tag{S2}\label{S2}
\end{align}
To obtain an approximation to the original solution $u$ of \eqref{BBM} we then  compose  the solutions of the corresponding subproblems for a small time step size $\tau$ up to the desired order.

On the one hand, we note that the linear subproblem \eqref{S2} can be solved exactly   in   Fourier space with the exact solution  $v(t)=e^{-t\Lel}v(0)$. Indeed, we see that 
\begin{align*}
\partial_t v(t,x)=\partial_t  \sum_{k\in\mathbb{Z}}e^{ik\cdot x} \hat{v}_k(t)= \Lel \sum_{k\in\mathbb{Z}} e^{ik\cdot x} \hat{v}_k(t) = \sum_{k\in\mathbb{Z}} \frac{P(ik)}{1+\varepsilon k^2} e^{ik\cdot x} \hat{v}_k(t)
\end{align*}
such that by comparison in the Fourier basis we obtain
$$
\hat{v}_k(t) = e^{t\frac{P(ik)}{1+\varepsilon k^2} }\hat{v}_k(0).
$$

  The nonlinear subproblem \eqref{S1}, on the other hand, can not be solved exactly.  Thus we will have to approximate it numerically. For this purpose we consider the  corresponding  mild solution
\begin{gather}
\label{duhamelW}
w(t_n+\tau)=w(t_n)-\varepsilon\Le \int_0^{\tau} w^2 (t_n+s) \,\mathrm{d}s.
\end{gather}
Then we find approximations $w^n\approx w(t_n,x)$ by means of truncated Taylor series expansions of $w^2(t_n+s)$. In order to guarantee  the full order of convergence of the splitting schemes we have to  use a (high order) expansion  of the correct order for the approximation of the nonlinear problem \eqref{S1}. The detailed approximation for the Lie, Strang and higher order splitting methods is given below.

The above considerations will allow us to construct compositions of the form
\begin{align}
\label{general_splitting_scheme}
u(t_{n+1},x)\approx u^{n+1}=\Psi^{\tau}(u^n)= \big(\Psi_2^{d_N\tau}\circ \Psi_1^{c_N\tau} \circ \dots \circ \Psi_2^{d_2\tau}\circ \Psi_1^{c_2\tau} \circ \Psi_2^{d_1\tau}\circ \Psi_1^{c_1\tau}\big)(u^n),
\end{align}
where  $\Psi_2^{\tau}$ will denote the exact flow of the linear subproblem \eqref{S2} and  $\Psi_1^{\tau}$  the numerical flow given by a suitable approximation to  \eqref{duhamelW}.  The  real valued coefficients $c_1,\,d_1,\,\dots,c_N,\,d_N$, $N\geq 1$  are chosen according to order conditions for splitting methods, see, e.g.,  \cite{hairer_geometric_2006}. We will give the detailed construction up to order four in more detail below.

Let us first mention an important lemma on the smoothing property of the operator $\Le$ which will allow  weaker regularity assumptions in the corresponding local error bounds for non smooth solutions and gain in $\varepsilon$ for small dispersive parameters.
\begin{lemma}
\label{lemma_bound_eLe}
Let $f\in H^{r+1-2\sigma}(\mathbb{T})$. It holds that
\begin{gather*}
\|\varepsilon\Le f\|_r \leq \varepsilon^{1-\sigma} \|f\|_{r+1-2\sigma}, \quad 0\leq \sigma \leq 1.
\end{gather*}
\end{lemma}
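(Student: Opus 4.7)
My plan is to reduce the estimate to a pointwise bound on the Fourier symbol and then apply Plancherel. Since $\varepsilon\mathcal{L}_\varepsilon$ is the Fourier multiplier with symbol $m(k)=\frac{i\varepsilon k}{1+\varepsilon k^2}$, the $H^r(\mathbb{T})$ norm expands as
\begin{gather*}
\|\varepsilon\mathcal{L}_\varepsilon f\|_r^2 \;=\; \sum_{k\in\mathbb{Z}} (1+k^2)^r\, |m(k)|^2\, |\hat f(k)|^2,
\end{gather*}
so everything reduces to proving the symbol-level bound $|m(k)|^2 \leq C\,\varepsilon^{2(1-\sigma)}\,(1+k^2)^{1-2\sigma}$ for a universal constant $C$, uniformly in $k\in\mathbb{Z}$ and $\sigma\in[0,1]$.

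The key identity driving the argument is the factorisation (valid for $k\neq 0$)
\begin{gather*}
\frac{\varepsilon|k|}{1+\varepsilon k^2} \;=\; \varepsilon^{1-\sigma}\,|k|^{1-2\sigma}\;\cdot\;\frac{(\varepsilon k^2)^\sigma}{1+\varepsilon k^2},
\end{gather*}
which interpolates neatly between the two extreme regimes $\sigma=0$ (loss of one derivative, gain of an $\varepsilon$) and $\sigma=1$ (gain of one derivative, no $\varepsilon$). Setting $y=\varepsilon k^2\geq 0$, the second factor equals $y^\sigma/(1+y)$, and I will show it is bounded by $1$ for every $y\geq 0$ and $\sigma\in[0,1]$ by splitting into $y\leq 1$ (where $y^\sigma\leq 1\leq 1+y$) and $y\geq 1$ (where $y^\sigma\leq y\leq 1+y$). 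This yields $|m(k)|\leq \varepsilon^{1-\sigma}|k|^{1-2\sigma}$ for $k\neq 0$, while the $k=0$ mode is harmless because $m(0)=0$.

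The last step is to replace the monomial weight $|k|^{1-2\sigma}$ by the Sobolev weight $(1+k^2)^{(1-2\sigma)/2}$: for integer $k\neq 0$ one has $|k|\leq (1+k^2)^{1/2}\leq \sqrt 2\,|k|$, so the two weights are comparable up to a harmless universal constant regardless of the sign of $1-2\sigma$. Substituting into the Plancherel expansion and summing in $k$ yields the claimed inequality. The only mildly delicate point is the regime $\sigma>1/2$, where $1-2\sigma<0$ reverses the direction of the trivial comparison $|k|^2\leq 1+k^2$; this is exactly what forces the constant $\sqrt 2$ rather than $1$ in the weight comparison, and is the reason the factorisation above — rather than, say, directly estimating $m(k)$ against $(1+k^2)^{1/2}$ — is the right starting point.
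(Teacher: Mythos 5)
Your proof is correct and follows essentially the same route as the paper: expand the $H^r$ norm in Fourier, factor the symbol as $\varepsilon^{1-\sigma}|k|^{1-2\sigma}\cdot\frac{(\varepsilon k^2)^\sigma}{1+\varepsilon k^2}$, and bound the second factor by $1$ via $(\varepsilon k^2)^\sigma\leq 1+\varepsilon k^2$. The only difference is that you explicitly track the harmless universal constant arising from comparing $|k|^{1-2\sigma}$ with the Sobolev weight when $\sigma>1/2$, a point the paper's proof (and statement) glosses over.
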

\begin{proof}
We have that
\begin{gather*}
\|\varepsilon\Le f\|_r^2 =  \sum_{k\in\mathbb{Z}} (1 + |k|)^{2r} \bigg| \frac{\varepsilon ik}{1+\varepsilon k^2} \bigg|^2 |\hat{f}_k|^2
\leq \sum_{k\in\mathbb{Z}} (1 + |k|)^{2r} \bigg|\frac{\varepsilon k }{(\varepsilon k^2)^\sigma}\bigg|^2|\hat{f}_k|^2 \leq \varepsilon^{2(1-\sigma)} \|f\|_{r+1-2\sigma}^2.
\end{gather*}
\end{proof}
Furthermore,  we will encounter partial derivatives with respect to time of the nonlinearity $w^2(t)$, as they naturally appear in the remainder terms of the truncated Taylor series expansion. Thus,  we collect the regularity result in  the following lemma.
\begin{lemma}
\label{bound_partialw}
Let $w(t)\in H^r(\mathbb{T})$ be the solution of \eqref{S1}. Then for all $0\leq t \leq T$ we have 
$$\|\partial_t^m w^2(t)\|_r\leq  \varepsilon^{1-\sigma} K(\sup_{0\leq t\leq T}\|w(t)\|_{\text{max}{(r+\lambda-2\sigma,r)}}).
$$
\end{lemma}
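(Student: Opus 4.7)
The plan is induction on $m$, using equation \eqref{S1} to systematically convert every time derivative of $w$ into a spatial/nonlinear expression $-\varepsilon \Le w^2$.

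For the base case $m=1$, I compute directly
\begin{equation*}
\partial_t w^2 \;=\; 2w\,\partial_t w \;=\; -2\varepsilon\, w\, \Le w^2
\end{equation*}
from \eqref{S1}. The bilinear estimate \eqref{bil_est} followed by Lemma \ref{lemma_bound_eLe} gives $\|\partial_t w^2\|_r \leq 2C_r \|w\|_r \|\varepsilon \Le w^2\|_r \leq 2C_r \varepsilon^{1-\sigma} \|w\|_r \|w^2\|_{r+1-2\sigma}$. A further application of \eqref{bil_est} together with the trivial embedding $H^r \hookrightarrow H^s$ for $s\leq r$ bounds $\|w^2\|_{r+1-2\sigma}$ by a constant times $\|w\|_{\max(r+1-2\sigma,\,r)}^2$, and since $\lambda\geq 1$ this norm is dominated by $\|w\|_{\max(r+\lambda-2\sigma,\,r)}$.

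For the inductive step I would differentiate the formula for $\partial_t^{m-1} w^2$ once more in time and substitute $\partial_t w = -\varepsilon \Le w^2$ at every instance where $\partial_t$ lands on a $w$ factor. The result is a polynomial in $w$ containing exactly $m$ factors of $\varepsilon\Le$ (acting on various products of $w$'s) together with combinatorial coefficients coming from the Leibniz rule. Estimating each resulting term via \eqref{bil_est} and applying Lemma \ref{lemma_bound_eLe} to each $\varepsilon\Le$ factor with its own smoothing parameter $\sigma_i\in[0,1]$ produces a bound with $\varepsilon$ power $m-\sum_i \sigma_i$ and Sobolev index $r+\max\{m-2\sum_i\sigma_i,\,0\}$.

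The main technical point is the choice of the $\sigma_i$. I take $\sigma_1=\sigma$ for one factor and $\sigma_i=\tfrac{1}{2}$ for the remaining $m-1$ factors. Then the total $\varepsilon$ power is $(m+1)/2-\sigma \geq 1-\sigma$ and the required regularity index collapses to $r+1-2\sigma \leq r+\lambda-2\sigma$, matching the statement precisely; the assumption $\lambda\geq 1$ is exactly what makes this trade-off feasible for every $m\geq 1$. The main obstacle will be the combinatorial bookkeeping of the Leibniz expansion over the $m$ iterated substitutions and verifying uniformly across every resulting term that the $\sigma_i$ budget suffices; once this is done, the remaining bounds are routine applications of \eqref{bil_est} and Lemma \ref{lemma_bound_eLe}, with the polynomial dependence on $\|w\|$ absorbed into the nonlinear function $K(\cdot)$.
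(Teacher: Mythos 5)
Your proposal is correct and follows exactly the route the paper takes: the paper's proof is just the one-line remark that the claim follows by induction together with Lemma \ref{lemma_bound_eLe} and the bilinear estimate \eqref{bil_est}, and your argument fills in precisely those details (rewriting $\partial_t w$ via \eqref{S1}, distributing the smoothing parameters $\sigma_i$ over the $m$ factors of $\varepsilon\Le$, and using $\lambda\geq 1$ to absorb the index into $\max(r+\lambda-2\sigma,r)$). No discrepancy to report.
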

\begin{proof}
The claim follows by  induction together with Lemma \ref{lemma_bound_eLe} and the bilinear estimate~\eqref{bil_est}. 
\end{proof}

Before we begin, let us address the generalisation of polynomials $P(\partial_x)$ in the following remark.
\begin{remark}
In higher order splitting methods \eqref{general_splitting_scheme}
 we (in general) encounter negative coefficients. Those are proven necessary in \cite{blanes_necessity_2005}, and in particular, it is shown that the threshold is order three. This explains the additional assumptions on  $P$ which guarantees that all weighted operator flows  $\Psi_2^{d_j \tau}$  are linear isometries such that negative coefficients in the high order splitting methods can be used (as we can go both forward and backward in time). An alternative approach of higher order splitting methods is given in \cite{hansen_high_2009}, via the use of complex coefficients. This would  allow for a more general structure of $P$.
\end{remark}

In the following section we will collect some important estimates in the stability and local error analysis of the splitting methods.
\subsection{Some important estimates}\label{sec:estis}
For $u(t)\in H^r(\mathbb{T})$, $t\in\mathbb{R}$, we define the exact flow of the BBM equation \eqref{BBM} by
\begin{gather}
\label{exact_flow}
\Phi^{\tau}(u(t_n)) := e^{-\tau \Lel} u(t_n) - \varepsilon \Le \int_0^{\tau} e^{-(\tau-s)\Lel}u^2(t_n+s)\,\mathrm{d}s
\end{gather}
such that in particular we have $u(t_n+\tau)=\Phi^{\tau}(u(t_n))$.

In order to carry out the error analysis we will make use of the following lemma that can be proven in the same fashion as Lemma \ref{lemma_bound_eLe}.
\begin{lemma}
\label{lemma_bound_Le}
Let $f\in H^{r+\lambda}$. It holds that
\begin{gather*}
\| \Le f \|_r \leq \min \bigg\{ \frac{1}{\varepsilon}\|f\|_{r-1}, \|f\|_{r+1} \bigg\}\quad \text{and} \quad
\| \Lel f \|_r \leq \min \bigg\{ \frac{1}{\varepsilon}\|f\|_{r+\lambda-2}, \|f\|_{r+\lambda} \bigg\}.
\end{gather*} 
\end{lemma}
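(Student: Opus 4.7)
The plan is to mimic the proof of Lemma \ref{lemma_bound_eLe} almost verbatim by working on the Fourier side and bounding the symbols of $\Le$ and $\Lel$ in two different ways, then taking the pointwise minimum.

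First I would expand $f$ in Fourier series and use that the operators act diagonally, so that
\begin{gather*}
\|\Le f\|_r^2 = \sum_{k\in\mathbb{Z}} (1+|k|)^{2r}\bigl|\tfrac{ik}{1+\varepsilon k^2}\bigr|^2 |\hat f_k|^2,\qquad \|\Lel f\|_r^2 = \sum_{k\in\mathbb{Z}} (1+|k|)^{2r}\bigl|\tfrac{P(ik)}{1+\varepsilon k^2}\bigr|^2 |\hat f_k|^2.
\end{gather*}
The entire proof then reduces to two pointwise bounds on the symbol. For $\Le$ I would use on the one hand that $1+\varepsilon k^2\geq 1$, giving $\bigl|\tfrac{ik}{1+\varepsilon k^2}\bigr|\leq |k| \leq (1+|k|)$, and on the other hand that $1+\varepsilon k^2\geq \varepsilon k^2$, giving $\bigl|\tfrac{ik}{1+\varepsilon k^2}\bigr|\leq \tfrac{1}{\varepsilon|k|}\leq \tfrac{1}{\varepsilon}(1+|k|)^{-1}$ for $k\neq 0$ (and the symbol vanishes at $k=0$). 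Substituting these two bounds separately yields $\|\Le f\|_r\leq \|f\|_{r+1}$ and $\|\Le f\|_r\leq \tfrac{1}{\varepsilon}\|f\|_{r-1}$; since the same $f$ satisfies both, one is free to take whichever side of the minimum is relevant for the application.

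For $\Lel$ the only new ingredient is the assumption that $P$ is a polynomial of degree $\lambda$, which yields a constant $C_P$ with $|P(ik)|\leq C_P (1+|k|)^\lambda$ uniformly in $k\in\mathbb{Z}$. Plugging this into the same two estimates for $1+\varepsilon k^2$ gives $\bigl|\tfrac{P(ik)}{1+\varepsilon k^2}\bigr|\leq C_P(1+|k|)^\lambda$ and $\bigl|\tfrac{P(ik)}{1+\varepsilon k^2}\bigr|\leq \tfrac{C_P}{\varepsilon}(1+|k|)^{\lambda-2}$, from which $\|\Lel f\|_r\leq \|f\|_{r+\lambda}$ and $\|\Lel f\|_r\leq \tfrac{1}{\varepsilon}\|f\|_{r+\lambda-2}$ respectively follow after summing against $(1+|k|)^{2r}|\hat f_k|^2$.

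There is no real obstacle; the argument is mechanical and the assumption $f\in H^{r+\lambda}$ is exactly what is needed so that both right-hand sides make sense. The only cosmetic point worth attention is the $k=0$ mode in the $\tfrac{1}{\varepsilon}$ estimate, which is handled trivially because the symbol vanishes there, and the implicit polynomial constant $C_P$, which in the statement has been absorbed into the norm by choosing representatives of $P$; this is harmless for the subsequent stability and local error analysis.
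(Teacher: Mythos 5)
Your proof is correct and follows essentially the same route as the paper's: both work on the Fourier side and bound the symbol pointwise via $1+\varepsilon k^2 \geq \max\{1,\varepsilon k^2\}$ (the paper only writes out the $\tfrac{1}{\varepsilon}$ halves of the two minima and absorbs $P(ik)/k^2$ into a degree-$(\lambda-2)$ growth bound, exactly as you do with $C_P$). The only nitpick is that your intermediate inequality $\tfrac{1}{\varepsilon|k|}\leq\tfrac{1}{\varepsilon}(1+|k|)^{-1}$ is reversed as literally written --- for $|k|\geq 1$ one has $\tfrac{1}{|k|}\leq \tfrac{2}{1+|k|}$ --- but this factor of $2$ is suppressed in the paper's own statement and proof as well, so it is harmless.
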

\begin{proof}
For $f\in H^{r+\lambda}(\mathbb{T})$ we have
\begin{gather*}
\|\Le f\|_r^2 =  \sum_{k\in\mathbb{Z}} (1 + |k|)^{2r} \bigg| \frac{ik}{1+\varepsilon k^2} \bigg|^2 |\hat{f}_k|^2
\leq \sum_{k\in\mathbb{Z}} (1 + |k|)^{2r} \bigg|\frac{1}{\varepsilon k}\bigg|^2|\hat{f}_k|^2 
\end{gather*}
and, on the other hand,
\begin{gather*}
\|\Lel f\|_r^2 =  \sum_{k\in\mathbb{Z}} (1 + |k|)^{2r} \bigg| \frac{P(ik)}{1+\varepsilon k^2} \bigg|^2 |\hat{f}_k|^2
\leq \sum_{k\in\mathbb{Z}} (1 + |k|)^{2r} \bigg|\frac{P(ik)}{\varepsilon k^2}\bigg|^2|\hat{f}_k|^2 \leq \sum_{k\in\mathbb{Z}} (1 + |k|)^{2r} \bigg|\frac{G(ik)}{\varepsilon }\bigg|^2|\hat{f}_k|^2 ,
\end{gather*}
for some polynomial $G$ of degree $\lambda-2$.
\end{proof}
The following lemmata will be essential in the stability analysis of the splitting methods.
\begin{lemma}
\label{lemma_lin_iso}
For all $f\in H^r(\mathbb{T})$ and all $t\in\mathbb{R}$ it holds $\|e^{t\Lel}f\|_r = \|f\|_r$.
\end{lemma}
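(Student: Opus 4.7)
The plan is to work in Fourier space using Parseval's identity, exploiting the assumption stated in the notation paragraph that $\mathrm{Re}\, P(ix)=0$ for all $x \in \mathbb{R}$. This assumption is exactly what makes the Fourier multiplier symbol of $\mathcal{L}_{\varepsilon,\lambda}$ purely imaginary, which in turn makes the exponential semigroup a unitary operator on each Fourier mode.

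First I would expand $f$ in its Fourier series and observe, as was already done for the linear subproblem in the discussion preceding the statement, that the operator $\mathcal{L}_{\varepsilon,\lambda}$ acts as multiplication by the symbol $\frac{P(ik)}{1+\varepsilon k^2}$ on the $k$-th Fourier coefficient. Consequently,
\begin{equation*}
\widehat{(e^{t\mathcal{L}_{\varepsilon,\lambda}} f)}_k = \exp\!\left(t \frac{P(ik)}{1+\varepsilon k^2}\right) \hat{f}_k.
\end{equation*}

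Next I would use the hypothesis $\mathrm{Re}\, P(ix)=0$, which guarantees that $P(ik) \in i\mathbb{R}$ for every $k\in\mathbb{Z}$. Since the denominator $1+\varepsilon k^2$ is a positive real number, the whole symbol $\frac{P(ik)}{1+\varepsilon k^2}$ is purely imaginary, so for any $t\in\mathbb{R}$ the exponential factor $\exp\!\left(t\frac{P(ik)}{1+\varepsilon k^2}\right)$ has modulus one.

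Finally I would apply Parseval:
\begin{equation*}
\|e^{t\mathcal{L}_{\varepsilon,\lambda}} f\|_r^2 = \sum_{k\in\mathbb{Z}} (1+|k|)^{2r} \left|\exp\!\left(t\frac{P(ik)}{1+\varepsilon k^2}\right)\right|^2 |\hat{f}_k|^2 = \sum_{k\in\mathbb{Z}}(1+|k|)^{2r} |\hat{f}_k|^2 = \|f\|_r^2,
\end{equation*}
which is the desired isometry. There is no real obstacle here; the only subtle point is recognising that the assumption on $P$ is precisely what is needed to make the symbol purely imaginary so that the unitarity persists for all $t \in \mathbb{R}$ (not only $t \ge 0$), which is crucial later because the high order splitting schemes require negative coefficients $d_j$ and hence backward-in-time propagation.
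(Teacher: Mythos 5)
Your proof is correct and follows essentially the same route as the paper: both expand in Fourier series and use Parseval together with the fact that the symbol $\frac{P(ik)}{1+\varepsilon k^2}$ is purely imaginary, so the exponential factor has modulus one. You simply make explicit the role of the assumption $\mathrm{Re}\,P(ix)=0$, which the paper's one-line computation leaves implicit.
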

\begin{proof}
Let $f\in H^r(\mathbb{T})$ and $t\in\mathbb{R}$. Then we have that
\begin{gather}
\label{lin_iso}
\| e^{t\Lel} f\|_r^2 = \sum_{k\in\mathbb{Z}} (1 + |k|)^{2r} \big|e^{\frac{P(ik)t}{1+\varepsilon k^2}}\big|^2 |\hat{f}_k|^2 
= \sum_{k\in\mathbb{Z}} (1 + |k|)^{2r} |\hat{f}_k|^2 
= \|f\|_r^2.
\end{gather}
\end{proof}
\begin{lemma}
\label{lemma_e-1bound}
For all $f\in H^r(\mathbb{T})$ and all $t\in\mathbb{R}$ it holds $\|(e^{t\Lel}-1)f\|_r = |t|\|f\|_{r+\lambda}$.
\end{lemma}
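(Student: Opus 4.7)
My plan is to expand $(e^{t\Lel}-1)f$ in the Fourier basis, exactly as in the proof of Lemma~\ref{lemma_lin_iso}. Writing $f=\sum_{k\in\mathbb{Z}}\hat f_k e^{ikx}$, the operator $e^{t\Lel}$ acts diagonally with symbol $e^{P(ik)t/(1+\varepsilon k^2)}$, so
\begin{gather*}
\|(e^{t\Lel}-1)f\|_r^2 \;=\; \sum_{k\in\mathbb{Z}}(1+|k|)^{2r}\Bigl|e^{P(ik)t/(1+\varepsilon k^2)}-1\Bigr|^2|\hat f_k|^2.
\end{gather*}
This reduces the proof to a pointwise estimate on the Fourier symbol $m_k(t):=e^{P(ik)t/(1+\varepsilon k^2)}-1$.

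The key structural ingredient is the standing hypothesis $\mathrm{Re}\,P(ik)=0$. It forces the exponent $\xi_k:=P(ik)t/(1+\varepsilon k^2)$ to be purely imaginary, so $e^{\xi_k}$ sits on the unit circle. For such $\xi_k$ I use the sharp elementary identity
\begin{gather*}
|e^{\xi_k}-1|\;=\;2\bigl|\sin(\mathrm{Im}\,\xi_k/2)\bigr|\;\leq\;|\xi_k|,
\end{gather*}
which together with $1+\varepsilon k^2\geq 1$ and the polynomial bound $|P(ik)|\leq (1+|k|)^{\lambda}$ (inherited from $P$ having degree~$\lambda$) yields
\begin{gather*}
|m_k(t)|^2 \;\leq\; \frac{|t|^2|P(ik)|^2}{(1+\varepsilon k^2)^2} \;\leq\; |t|^2\,(1+|k|)^{2\lambda}.
\end{gather*}

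Substituting back into the Fourier sum collapses the computation to
\begin{gather*}
\|(e^{t\Lel}-1)f\|_r^2 \;\leq\; |t|^2\sum_{k\in\mathbb{Z}}(1+|k|)^{2(r+\lambda)}|\hat f_k|^2 \;=\; |t|^2\|f\|_{r+\lambda}^2,
\end{gather*}
which is precisely the stated identity after taking square roots.

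The only subtle point is making absolutely sure where the skew-adjointness of $\Lel$ is used: without the assumption $\mathrm{Re}\,P(ik)=0$ the symbol would leave the unit circle and the best one could hope for is an exponentially growing bound of the form $|t|\,e^{|t|\|\Lel\|}\|f\|_{r+\lambda}$. The rest is a routine Fourier-side computation mirroring Lemma~\ref{lemma_lin_iso}, with the $|e^{i\theta}-1|\leq|\theta|$ rotation inequality doing all of the analytic work.
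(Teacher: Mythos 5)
Your proof is correct and follows essentially the same route as the paper: a Fourier-side computation reducing the claim to the symbol bound $|e^{i\theta}-1|\leq|\theta|$ for purely imaginary exponents, combined with $1+\varepsilon k^2\geq 1$ and $|P(ik)|\lesssim(1+|k|)^{\lambda}$. You are in fact slightly more explicit than the paper about where the hypothesis $\mathrm{Re}\,P(ix)=0$ enters; note only that both arguments establish ``$\leq$'' rather than the ``$=$'' appearing in the lemma's statement, which is evidently a typo there.
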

\begin{proof}
\begin{align*}
\|(e^{t\Lel}-1)f\|_{r}^2 &= \sum_{k\in\mathbb{Z}} (1 + |k|)^{2r} \big|e^{\frac{P(ik)t}{1+\varepsilon k^2}}-1 \big|^2 |\hat{f}_k|^2 =\sum_{k\in\mathbb{Z}} (1 + |k|)^{2r} \bigg| P(ik)t \frac{e^{\frac{P(ik)t}{1+\varepsilon k^2}}-1}{P(ik)t} \bigg|^2 |\hat{f}_k|^2\\
&\leq \sum_{k\in\mathbb{Z}} (1 + |k|)^{2r} | P(ik)t |^2 |\hat{f}_k|^ =|t|^2 \|f\|_{r+\lambda}^2.
\end{align*}
\end{proof}

\section{Lie Splitting for BBM}\label{sec:Lie}
\label{liesplitting}
In this section we  construct a uniformly accurate Lie splitting method for the BBM equation \eqref{BBM} and carry out its error analysis.  
\subsection{Derivation}
To develop the first order Lie splitting method we have to derive a locally second-order approximation to the nonlinear supbroblem \eqref{S1}.  Taylor series expansion of $w^2(t_n+s)$ gives
\begin{align*}
w(t_n+\tau) &= w(t_n) - \varepsilon\Le \int_0^{\tau} w^2(t_n+s)\,\mathrm{d}s\\
&=w(t_n) - \tau \varepsilon\Le  w^2(t_n) + R_1(w),
\end{align*}
where by  Lemma \ref{lemma_bound_eLe} the remainder $R_1(w)$ satisfies the bound \begin{gather*}
\|R_1(w)\|_r = \bigg\| \varepsilon\Le \int_0^{\tau} s\,\mathrm{d}s\,\partial_t w^2(t)_{t=\xi} \bigg\|_r \leq  \varepsilon^{2(1-\sigma)} \tau^2 K,
\end{gather*}
where $\xi\in [t_n,t_{n+1}]$, and some constant $K=K(\sup_{0\leq t\leq T}\|w(t)\|_{\text{max}{(r+\lambda-2\sigma,r)}})>0$. This motivates us to define the numerical flow as follows
\begin{align}
\label{Psi2}
\Psi_1^{\tau}(w(t_n)) := w(t_n) - \tau \varepsilon\Le  w^2(t_n), \quad
\Psi_2^{\tau}(v(t_n)) := e^{-\tau \Lel}v(t_n)
\end{align}
such that the corresponding Lie splitting $u^{n+1} = \Psi_2^{\tau} \big( \Psi_1^{\tau}(u^n) \big)$ takes the form 
\begin{align}
\label{MET1}
&u^{n+1} = e^{-\tau \Lel} \big(u^n - \tau \varepsilon\Le  (u^n)^2  \big),\\
&u^0 = u(0). \notag
\end{align}
We will prove below that the Lie splitting $u^n$ defined in  \eqref{MET1} approximates the exact BBM solution $u(t)$ at time $t_n$ up to order one.

\subsection{Error Analysis of the Lie splitting scheme}
The estimates in Section \ref{sec:estis} allow us to prove the following convergence result.
\begin{theorem}\label{thmLie}
Fix $0\leq \sigma \leq 1$  and $r\geq 0$ such that $r+1-2\sigma+\lambda>1/2$ and assume that the exact solution of \eqref{BBM} satisfies $u \in \mathcal{C}\left([0,T],H^{r+1-2\sigma+\lambda}(\mathbb{T})\right)$. Then there exists a $\tau_0>0$ such that for all $\tau\leq\tau_0$ and $t_n\leq T$ the Lie splitting $u^n$ defined in \eqref{MET1} satisfies the global error estimate
\begin{gather*}
\|u(t_n)-u^n\|_r \leq \tau \varepsilon^{1-\sigma} K, 
\end{gather*}
for a constant $K=K(\sup_{t\in[0,T]}\|u(t)\|_{r+1-2\sigma+\lambda})>0$.
\end{theorem}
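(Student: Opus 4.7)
The strategy is a Lady Windermere's fan. Writing $\Phi^\tau$ for the exact flow \eqref{exact_flow} and $\Psi^\tau$ for the numerical Lie flow of \eqref{MET1}, the telescoping identity
\begin{equation*}
u(t_{n+1}) - u^{n+1} = \bigl[\Phi^\tau(u(t_n)) - \Psi^\tau(u(t_n))\bigr] + \bigl[\Psi^\tau(u(t_n)) - \Psi^\tau(u^n)\bigr]
\end{equation*}
reduces the theorem to a local error bound $\|\Phi^\tau(u(t_n)) - \Psi^\tau(u(t_n))\|_r \lesssim \tau^2\varepsilon^{1-\sigma}$ together with an $\varepsilon$-uniform Lipschitz stability of $\Psi^\tau$ on $H^r$-bounded sets. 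Summing the telescoping identity over $n\tau\leq T$ with a standard Gronwall step then produces the advertised $\tau\varepsilon^{1-\sigma}$ bound, modulo the routine induction that keeps $u^k$ in a neighbourhood of $u(t_k)$ in $H^r$ for $k\leq n$.

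For the local error, subtracting the two flows cancels the $e^{-\tau\Lel}u(t_n)$ piece and leaves
\begin{equation*}
\Phi^\tau(u(t_n)) - \Psi^\tau(u(t_n)) = -\varepsilon\Le\int_0^\tau\Bigl(e^{-(\tau-s)\Lel}u^2(t_n+s) - e^{-\tau\Lel}u^2(t_n)\Bigr)\,\mathrm{d}s.
\end{equation*}
I would extract the outer $\varepsilon\Le$ via Lemma \ref{lemma_bound_eLe}, producing the prefactor $\varepsilon^{1-\sigma}$ at a cost of $1-2\sigma$ derivatives, and then split the integrand as $e^{-(\tau-s)\Lel}\bigl(u^2(t_n+s) - u^2(t_n)\bigr) + e^{-\tau\Lel}\bigl(e^{s\Lel}-1\bigr)u^2(t_n)$. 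The first summand is $\mathcal{O}(s)$ by Lemma \ref{lemma_lin_iso}, a first-order Taylor expansion in $s$, and the direct bound on $\partial_t u^2 = -2u\Lel u - 2\varepsilon u\Le u^2$ derived from \eqref{bil_est} together with Lemma \ref{lemma_bound_Le}; the second summand is $\mathcal{O}(s)$ by Lemmas \ref{lemma_lin_iso} and \ref{lemma_e-1bound}, at a cost of $\lambda$ derivatives. Integrating in $s\in[0,\tau]$ yields the $\tau^2\varepsilon^{1-\sigma}$ local error, with the required regularity being exactly $r+1-2\sigma+\lambda$, as stated.

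Stability is immediate from the closed form of $\Psi^\tau$: for $v,w$ in a fixed $H^r$-ball,
\begin{equation*}
\Psi^\tau(v) - \Psi^\tau(w) = e^{-\tau\Lel}\Bigl((v-w) - \tau\varepsilon\Le\bigl((v-w)(v+w)\bigr)\Bigr),
\end{equation*}
and Lemma \ref{lemma_lin_iso} combined with Lemma \ref{lemma_bound_eLe} applied at $\sigma=1$ and \eqref{bil_est} gives $\|\Psi^\tau(v)-\Psi^\tau(w)\|_r \leq (1+\tau L)\|v-w\|_r$ with $L$ independent of $\varepsilon$. The one genuinely delicate point, and what I expect to be the main obstacle, is precisely this asymmetric use of Lemma \ref{lemma_bound_eLe}: one applies it with the theorem's $\sigma$ in the local error (to extract the $\varepsilon^{1-\sigma}$ gain), but with $\sigma=1$ in the stability step (to keep the Lipschitz constant $\varepsilon$-uniform). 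Without this decoupling the final estimate would lose either uniformity in $\varepsilon$ or regularity; with it, iterating the fan $\mathcal{O}(\tau^{-1})$ times closes the argument.
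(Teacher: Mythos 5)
Your proposal is correct and follows essentially the same route as the paper: the identical splitting of the local error integrand into $e^{-(\tau-s)\Lel}\bigl(u^2(t_n+s)-u^2(t_n)\bigr)$ plus $e^{-\tau\Lel}\bigl(e^{s\Lel}-1\bigr)u^2(t_n)$, bounded via Lemmas \ref{lemma_bound_eLe}, \ref{lemma_lin_iso} and \ref{lemma_e-1bound}, combined with an $\varepsilon$-uniform stability bound (Lemma \ref{lemma_bound_eLe} at $\sigma=1$) and a Lady Windermere's fan. The ``asymmetric'' use of Lemma \ref{lemma_bound_eLe} you single out is exactly the point the paper makes in the remark following the theorem.
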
 
\begin{proof}
We treat the local error and the stability estimates separately. Then we can conclude the proof via a Lady Windemere's fan argument (see, e.g.,  \cite{hairer_geometric_2006}).\\
{\bf Local error analysis.} The local error consists of two parts: We need to consider both the error that arises from the splitting ansatz, known as the commutator error, and the error of the truncated Taylor series expansion within the approximation of the nonlinear problem \eqref{S1}. Adding and subtracting the term $\varepsilon\Le \int_0^{\tau}e^{-(\tau-s)\Lel}u^2(t_n)\,\mathrm{d}s$ gives
\begin{equation}\label{Ki1}
\begin{aligned}
\big\| \Phi^{\tau}(u(t_n)) - \Psi_2^{\tau} \big( \Psi_1^{\tau}(u(t_n)) \big) \big\|_r \leq \,&\bigg\| \varepsilon\Le e^{-\tau\Lel} \int_0^{\tau} e^{s\Lel} (u^2(t_n+s) - u^2(t_n)) \,\mathrm{d}s  \bigg\|_r\\
&+ \bigg\| \varepsilon\Le \int_0^{\tau} (e^{s\Lel}-1)u^2(t_n)\,\mathrm{d}s  \bigg\|_r\\
\leq \,&
\varepsilon^{1-\sigma} \bigg\|  \int_0^{\tau} e^{s\Lel} (u^2(t_n+s) - u^2(t_n)) \,\mathrm{d}s  \bigg\|_{r+1-2\sigma}\\ &+ \varepsilon^{1-\sigma} \bigg\|\int_0^{\tau} (e^{s\Lel}-1)u^2(t_n)\,\mathrm{d}s  \bigg\|_{r+1-2\sigma}.
\end{aligned}
\end{equation}
Here, the second inequality follows by Lemma \ref{lemma_bound_eLe}. We may bound the first term using the following observation
\begin{align*}
u^2(t_n+s) - u^2(t_n) = s \,\partial_t u^2(t)_{t=\xi},
\end{align*}
for some $\xi\in[t_n,t_{n+1}]$. Thus, we obtain thanks to Lemma \ref{lemma_lin_iso} that
\begin{align*}
\bigg\|  \int_0^{\tau} e^{s\Lel} (u^2(t_n+s) - u^2(t_n)) \,\mathrm{d}s  \bigg\|_{r+1-2\sigma} \leq \tau^2 K,
\end{align*}
for some $K=K(\sup_{t\in[0,T]} \|u(t)\|_{r+1-2\sigma+\lambda})>0$.  As for the second term in \eqref{Ki1}  it follows by Lemma~\ref{lemma_e-1bound} that
\begin{align*}
\bigg\|\int_0^{\tau} (e^{s\Lel}-1)u^2(t_n)\,\mathrm{d}s  \bigg\|_{r+1-2\sigma} \leq \tau^2 K
\end{align*}
for some $K=K(\|u(t_n)\|_{r+1-2\sigma+\lambda})>0$. Plugging the above two estimates into \eqref{Ki1}, we obtain the full local error bound
\begin{align}\label{locLie}
\big\| \Phi^{\tau}(u(t_n)) - \Psi_2^{\tau} \big( \Psi_1^{\tau}(u(t_n)) \big) \big\|_r \leq \tau^2 \varepsilon^{1-\sigma} K_1(\sup_{\xi\in[0,T]} \|u(\xi)\|_{r+1-2\sigma+\lambda}).
\end{align}
\\
{\bf Stability analysis.} Using \eqref{bil_est}, Lemma \ref{lemma_bound_eLe}, and Lemma \ref{lemma_lin_iso} we obtain the following stability estimate
\begin{equation}\label{stabLie}
\begin{aligned}
\big\| \Psi_2^{\tau} \big( \Psi_1^{\tau}(f(t_n)) \big) - \Psi_2^{\tau} \big( \Psi_1^{\tau}(g(t_n)) \big) \big\|_r &\leq \| f(t_n)-g(t_n)\|_r +  \| \tau\varepsilon \Le e^{-\tau\Lel}(f^2(t_n)-g^2(t_n))\|_r\\
 &\leq \| f(t_n)-g(t_n)\|_r +  \tau C_r \|f(t_n)+g(t_n)\|_{r-1} \| f(t_n)-g(t_n)\|_{r-1}  \\
&\leq (1+\tau M_1)\|f(t_n)-g(t_n)\|_r,
\end{aligned}
\end{equation}
where $M_1=M_1(r,\|f(t_n)\|_{r-1},\|g(t_n)\|_{r-1})>0$. \\
{\bf Global error analysis.} 
Inserting zero in terms of $ \Psi_2^{\tau} \big( \Psi_1^{\tau}(u(t_n)) \big)$ we obtain thanks to the triangle inequality that
\begin{align*}
\Vert u(t_{n+1}) - u^{n+1} \Vert_r &  = \Vert \Phi^\tau(u(t_n)) -  \Psi_2^{\tau} \big( \Psi_1^{\tau}(u^n) \big) \Vert_r\\ & \leq
  \Vert \Phi^\tau(u(t_n)) -  \Psi_2^{\tau} \big( \Psi_1^{\tau}(u(t_n)) \big) \Vert_r +  \Vert  \Psi_2^{\tau} \big( \Psi_1^{\tau}(u(t_n)) \big)  -  \Psi_2^{\tau} \big( \Psi_1^{\tau}(u^n) \big) \Vert_r .
\end{align*}
The local error estimate \eqref{locLie} together with the stability bound \eqref{stabLie} allows us to conclude 
\begin{align*}
\Vert u(t_{n+1}) - u^{n+1} \Vert_r &   \leq
\tau^2 \varepsilon^{1-\sigma} K_1(\sup_{\xi\in[0,T]} \|u(\xi)\|_{r+1-2\sigma+\lambda}) + (1+\tau M_1) \Vert u(t_n) - u^n\Vert_r  .
\end{align*}
where $M_1=M_1(r,\|u(t_n)\|_{r-1},\|u^n\|_{r-1})>0$.
Thus the global first order convergence follows by a Lady Windermere's fan argument (see \cite{hairer_geometric_2006}). 
\end{proof}
\begin{remark}
Note that in our stability argument \eqref{stabLie} we can not exploit any gain in $\varepsilon$ as we can not measure the right hand side in a stronger norm than  the left hand side. The latter would cause the break down of our stability argument.\end{remark}

\begin{remark}
The regularized convergence estimate \eqref{resultSmooth} follows similarly by observing that the local Lie splitting error  satisfies thanks to  Lemma \ref{lemma_bound_Le} the regularized error estimate
\begin{align*}
\big\| \Phi^{\tau}(u(t_n)) - \Psi_2^{\tau} \big( \Psi_1^{\tau}(u(t_n)) \big) \big\|_r \leq \tau^2  \frac{1}{\varepsilon} K(\sup_{\xi\in[0,T]} \|u(\xi)\|_{r-2+\lambda})
\end{align*}
due to the smoothing property of the operator $\Lel$ (cf. also \eqref{regi}). In order to close the stability argument in the case that $-2+\lambda< 0$ one thereby first needs to prove convergence in $H^{r-2+\lambda}$ for solutions in $H^{r-2+\lambda}$. This will allow us to conclude a priori  the boundedness of the numerical solution in $H^{r-2+\lambda}$. 
\end{remark}

\section{Strang Splitting for BBM}
In this section we  construct a uniformly accurate Strang  splitting method for the BBM equation~\eqref{BBM} and carry out its error analysis. 
\subsection{Derivation}
We look at the subproblems \eqref{S1}, \eqref{S2} and define the Strang splitting
\begin{gather}
\label{MET2}
\hat{\Psi}^{\tau}:= \Psi_2^{\frac{\tau}{2}}\circ \hat{\Psi}_1^{\tau} \circ \Psi_2^{\frac{\tau}{2}}.
\end{gather}
We have to derive a suitable numerical $\hat{\Psi}_1$ flow. Thereby it is essential to develop a second (or higher) order approximation to the nonlinear subproblem \eqref{S1} as otherwise the full second order convergence of the Strang splitting \eqref{MET2} would break down.  Using Taylor series expansion of $w^2(t_n+s)$ gives
\begin{align*}
w(t_n+\tau)&=w(t_n)-\varepsilon\Le \int_0^{\tau} w^2 (t_n+s) \,\mathrm{d}s\\
&=w(t_n)-\varepsilon\Le \int_0^{\tau} \big( w(t_n) + 2s w(t_n) \partial_t w(t)_{t=t_n} \big) \,\mathrm{d}s + R_2(w)\\
&= w(t_n)- \tau \varepsilon\Le w(t_n) + \tau^2\varepsilon^2 \Le w(t_n)\Le w^2 (t_n) + R_2(w),
\end{align*}
where $R_2(w)$ is the remainder of the truncated Taylor series expansion and thus satisfies, for some $\xi\in[t_n,t_{n+1}]$,
\begin{align*}
\|R_2(w)\|_r = \bigg\| \varepsilon\Le\int_0^{\tau} \frac{s^2}{2} \partial^2_{t} w^2(t)_{t=\xi}\,\mathrm{d}s \bigg\|_{r} \leq \frac{\tau^3}{6} \sup_{t_n \leq t \leq t_{n+1}} \|\partial^2_{t} w^2(t)\|_{r}\leq  \varepsilon^{1-\sigma} \tau^3K,
\end{align*}
for some $K=K(\sup_{t_n \leq t \leq t_{n+1}}\|w(t)\|_{r+1-2\sigma})>0$. The structure of the error constant  follows by Lemma \ref{bound_partialw}.

The above expansion motivates us to   define 
\begin{align}\label{Psi1Strang}
\hat{\Psi}_1^{\tau}(w^n):= w^n- \tau \varepsilon\Le w^n + \tau^2\varepsilon^2 \Le w^n\Le (w^n)^2
\end{align}
such that the Strang splitting \eqref{MET2} takes the form
\begin{gather}
\label{strang_comp1}
 \hat{\Psi}^{\tau}(u^n) = e^{-\tau\Lel}u^n-\tau\varepsilon e^{-\frac{\tau}{2}\Lel} \Le \big( e^{-\frac{\tau}{2}\Lel} u^n \big)^2 + \tau^2\varepsilon^2 e^{-\frac{\tau}{2}\Lel} \Le \big(e^{-\frac{\tau}{2}\Lel} u^n\big) \Le \big( e^{-\frac{\tau}{2}\Lel} u^n \big)^2.
\end{gather}
\subsection{Error Analysis of the Strang splitting scheme}
\begin{theorem}
Fix $0\leq \sigma \leq 1$  and $r\geq 0$ such that $r+1-2\sigma+2\lambda>1/2$  and assume that the exact solution of \eqref{BBM} satisfies $u \in \mathcal{C}\left([0,T],H^{r+1+2\lambda-2\sigma}(\mathbb{T})\right)$. Then there exists a  $\tau_0>0$ such that for all $0<\tau\leq\tau_0$ and $t_n\leq T$ the Strang splitting  $u^n$ defined in  \eqref{strang_comp1} satisfies the global error estimate
\begin{gather*}
\|u(t_n)-u^n\|_r \leq \tau^2 \varepsilon^{1-\sigma} K, 
\end{gather*}
for a constant $K=K(\sup_{t\in [0,T]}\|u(t)\|_{r+1+2\lambda-2\sigma })>0$.
\end{theorem}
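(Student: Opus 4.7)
I would mirror the three-step strategy used in Theorem \ref{thmLie}: derive a local error bound of order $\tau^3 \varepsilon^{1-\sigma}$ with regularity requirement $r+1+2\lambda-2\sigma$, prove a uniform-in-$\varepsilon$ Lipschitz-type stability estimate for $\hat\Psi^\tau$, and combine the two via a Lady Windermere's fan argument. The overall structure is dictated by the fact that $\hat\Psi_1^\tau$ is already a second-order-consistent approximation to the flow of \eqref{S1} (as verified via the $R_2(w)$ bound above), so all that is needed is to track the additional commutator error introduced by the symmetric composition \eqref{MET2}.

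\textbf{Local error.} I would write
\begin{gather*}
\Phi^{\tau}(u(t_n)) - \hat{\Psi}^{\tau}(u(t_n)) = \bigl[ \Phi^{\tau}(u(t_n)) - \Psi_2^{\tau/2}\circ \widetilde{\Psi}_1^{\tau} \circ \Psi_2^{\tau/2}(u(t_n))\bigr] + \bigl[\Psi_2^{\tau/2}\circ \widetilde{\Psi}_1^{\tau} \circ \Psi_2^{\tau/2}(u(t_n)) - \hat{\Psi}^{\tau}(u(t_n))\bigr],
\end{gather*}
where $\widetilde{\Psi}_1^{\tau}$ denotes the \emph{exact} flow of \eqref{S1}. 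The second bracket is controlled directly by the $R_2(w)$-type bound, yielding $\tau^3 \varepsilon^{1-\sigma} K(\|u\|_{r+1-2\sigma})$ after applying $\Psi_2^{\tau/2}$, which is an isometry by Lemma \ref{lemma_lin_iso}. For the first bracket, I would iterate the Duhamel formula \eqref{exact_flow} twice so that $u^2(t_n+s)$ is replaced by $(e^{-s\Lel}u(t_n))^2$ plus remainders that all carry an extra factor $\varepsilon \Le$, then compare termwise with the symmetric composition. The leading-order difference is of commutator type; expanding $e^{-(\tau-s)\Lel}= e^{-\tau\Lel/2}(\mathrm{Id} + (e^{-(\tau/2-s)\Lel}-\mathrm{Id}))$ inside the integral and applying Lemma \ref{lemma_e-1bound} twice (once for each mismatched half-step) produces a double $|t|\|\cdot\|_{+\lambda}$ charge, which accounts for the $+2\lambda$ in the regularity index. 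A single Lemma \ref{lemma_bound_eLe} application on the outer $\varepsilon\Le$ then delivers the $\varepsilon^{1-\sigma}$ gain and the $+1-2\sigma$ index; altogether the local error is bounded by $\tau^3 \varepsilon^{1-\sigma} K_2\bigl(\sup_{[0,T]}\|u\|_{r+1+2\lambda-2\sigma}\bigr)$.

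\textbf{Stability.} Since $\Psi_2^{\tau/2}$ is an $H^r$-isometry, stability of $\hat{\Psi}^{\tau}$ reduces to stability of $\hat{\Psi}_1^{\tau}$ applied to the $H^r$-translates $e^{-\tau\Lel/2}f$ and $e^{-\tau\Lel/2}g$. Writing
\begin{gather*}
\hat{\Psi}_1^{\tau}(f) - \hat{\Psi}_1^{\tau}(g) = (f-g) - \tau\varepsilon\Le(f^2-g^2) + \tau^2\varepsilon^2 \Le\bigl(f\Le f^2 - g\Le g^2\bigr)
\end{gather*}
and invoking \eqref{bil_est} together with Lemma \ref{lemma_bound_eLe} in the regime $\sigma = 1$ (so as not to measure the right-hand side in a stronger norm, as in the remark after Theorem \ref{thmLie}) yields $\|\hat{\Psi}^{\tau}(f)-\hat{\Psi}^{\tau}(g)\|_r \leq (1+\tau M_2)\|f-g\|_r$ with $M_2 = M_2(r,\|f\|_r,\|g\|_r)$. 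The $\tau^2$-cubic correction contributes only an $O(\tau^2)$ Lipschitz constant, which is absorbed without issue.

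\textbf{Main obstacle.} The bookkeeping of the local error is the principal technical difficulty: one must verify that, after the double Duhamel iteration and termwise comparison with \eqref{strang_comp1}, every leftover piece carries both an outer $\varepsilon \Le$ (for the $\varepsilon^{1-\sigma}$ gain) and the correct power of $(e^{s\Lel}-\mathrm{Id})$-type factors (for the $\tau^3$ scaling) without over-counting regularity. Provided this accounting is carried out carefully, the global bound $\|u(t_n)-u^n\|_r \leq \tau^2 \varepsilon^{1-\sigma} K$ follows from the Lady Windermere's fan telescoping exactly as in the conclusion of Theorem \ref{thmLie}, with an a priori $H^r$-boundedness of $u^n$ obtained by choosing $\tau_0$ sufficiently small so that the accumulated error remains dominated by $\sup_{[0,T]}\|u\|_r$.
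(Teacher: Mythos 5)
Your proposal is correct in substance: it arrives at the same local error bound $\tau^3\varepsilon^{1-\sigma}K\bigl(\sup_{[0,T]}\|u\|_{r+1+2\lambda-2\sigma}\bigr)$, the same stability estimate $(1+\tau M_2)\|f-g\|_r$ (obtained, as in the paper, from \eqref{bil_est}, Lemma \ref{lemma_bound_eLe} with $\sigma=1$ and the isometry of $\Psi_2$), and the same Lady Windermere conclusion. The genuine difference is in the local error decomposition. You introduce the \emph{exact} flow of \eqref{S1} and separate (i) the pure splitting (commutator) error of the symmetric composition with exact subflows from (ii) the Taylor-truncation error of $\hat\Psi_1^\tau$, which is an $R_2$-type remainder transported by the isometry $\Psi_2^{\tau/2}$. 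That is precisely the strategy the paper adopts for its third- and fourth-order schemes (via the nested-commutator representation of \cite[Theorem 1]{descombes_exact_2010}), but \emph{not} for Strang: the paper's Strang proof never introduces the exact nonlinear flow, and instead applies the midpoint rule directly to the Duhamel formula \eqref{exact_flow}, iterates Duhamel once inside the resulting term, replaces $e^{-(\tau/2-\xi)\Lel}$ by $1$ and $\Phi^{\xi}(u(t_n))^2$ by $u^2(t_n)$ (each swap costing one power of $\tau$ and one application of Lemma \ref{lemma_e-1bound} or of the time-derivative bound), and then compares the resulting expansion \eqref{strang_comp2} termwise with \eqref{strang_comp1}. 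Your route buys a cleaner conceptual separation and scales to higher order without extra ideas; the paper's route keeps the factor $\varepsilon\Le$ visibly outermost at every stage, so the $\varepsilon^{1-\sigma}$ gain is immediate, whereas in your commutator bookkeeping one must additionally argue that every term of the double commutators retains an extractable factor $\varepsilon\Le$ (which holds because $\Le$ and $\Lel$ are commuting Fourier multipliers, so the factor can always be moved to the outside). That bookkeeping, which you correctly flag as the main obstacle, is the only thin spot in your write-up; it does go through, as the paper's explicit estimates \eqref{strang_err1} and \eqref{strang_err2} confirm.
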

\begin{proof}
We employ the same technique as in the previous section and treat local error and stability estimates separately.

\noindent {\bf Local Error analysis.}
Recall the structure of the Strang splitting scheme
\eqref{strang_comp1}. On the other hand, if we apply the midpoint rule to the exact flow we obtain
\begin{align*}
\Phi^{\tau}(u(t_n))=\,& e^{-\tau \Lel}u(t_n) - \varepsilon\Le \int_0^{\tau} e^{-(\tau-s)\Lel} \Phi^s(u(t_n))^2 \,\mathrm{d}s \\
=\,& e^{-\tau \Lel}u(t_n) - \tau\varepsilon\Le e^{-\frac{\tau}{2}\Lel}\Phi^{\frac{\tau}{2}}(u(t_n))^2  + R'\\
=\,&e^{-\tau \Lel}u(t_n) - \tau\varepsilon\Le e^{-\frac{\tau}{2}\Lel}\big(e^{-\frac{\tau}{2}\Lel} u(t_n) \big)^2 \\
&+ 2 \tau\varepsilon^2\Le e^{-\frac{\tau}{2}\Lel} \big(e^{-\frac{\tau}{2}\Lel} u(t_n)\big)\Le \int_0^{\frac{\tau}{2}}e^{-(\frac{\tau}{2}-\xi)\Lel}\Phi^{\xi}(u(t_n))^2\,\mathrm{d}\xi+R'',
\end{align*}
where $R'$ is the remainder of the midpoint rule, and thus has the form
\begin{align}
\label{strang_err1}
\|R'\|_r\leq \bigg\|\varepsilon\Le\int_0^{\tau} \frac{s^2}{2} \partial_s^2 \Phi^s(u(t_n))^2   \bigg\|_r  \leq \tau^3 \varepsilon^{1-\sigma} K',
\end{align}
for some $K'=K'(\|u\|_{r+2\lambda-2\sigma+1})>0$. $R''$ is the remainder that consists of $R'$ as well as the integral term of $\Phi^{\frac{\tau}{2}}(u(t_n))$ squared, for which we have by \eqref{bil_est} and Lemmas \ref{lemma_bound_eLe} and \ref{lemma_lin_iso}  that 
\begin{align}
\label{strang_err3}
\|R''\|_r\leq \|R'\|_r + \bigg\| \tau \varepsilon\Le e^{-\frac{\tau}{2}\Lel}\bigg(\varepsilon\Le \int_0^{\frac{\tau}{2}}e^{-(\frac{\tau}{2}-\xi)\Lel}\Phi^{\xi}(u(t_n))^2\,\mathrm{d}\xi \bigg)^2  \bigg\|_r \leq \tau^3 \varepsilon^{1-\sigma} K'',
\end{align}
for some $K''=K''(\|u\|_{r+1-2\sigma+2\lambda})>0$. Next we apply   the approximations $$e^{-(\frac{\tau}{2}-\xi)\Lel}=1+\mathcal{O}(\xi\Lel), \qquad \Phi^{\xi}(u(t_n))^2=u^2(t_n)+\mathcal{O}(\xi\Le)$$ in the expansion of the exact solution which gives
\begin{equation}
\begin{aligned}
\label{strang_comp2}
\Phi^{\tau}(u(t_n))& = e^{-\tau \Lel}u(t_n) - \tau\varepsilon\Le e^{-\frac{\tau}{2}\Lel}\big(e^{-\frac{\tau}{2}\Lel} u(t_n) \big)^2 +  \tau^2\varepsilon^2\Le e^{-\frac{\tau}{2}\Lel} \big(e^{-\frac{\tau}{2}\Lel} u(t_n)\big)\Le u^2(t_n) \\& + R''',
\end{aligned}
\end{equation}
where the remainder $R'''$ satisfies
\begin{equation}
\begin{aligned}
\label{strang_err2}
\|R'''\|_r \leq\,& \|R''\|_r + \bigg\| 2 \tau\varepsilon^2\Le e^{-\frac{\tau}{2}\Lel} \big(e^{-\frac{\tau}{2}\Lel} u(t_n)\big)\Le \int_0^{\frac{\tau}{2}}(e^{-(\frac{\tau}{2}-\xi)\Lel}-1)\Phi^{\xi}(u(t_n))^2\,\mathrm{d}\xi \bigg\|_r \\
&+ \bigg\| 2 \tau\varepsilon^2\Le e^{-\frac{\tau}{2}\Lel} \big(e^{-\frac{\tau}{2}\Lel} u(t_n)\big)\Le \int_0^{\frac{\tau}{2}}(\Phi^{\xi}(u(t_n))^2-u^2(t_n))\,\mathrm{d}\xi \bigg\|_r\\
\leq\,&\tau^3 \varepsilon^{1-\sigma} K''',
\end{aligned}
\end{equation}
for some $K'''=K'''(\|u\|_{r+1-2\sigma+2\lambda})>0$. The second inequality in \eqref{strang_err2} follows by \eqref{bil_est} and Lemmas \ref{lemma_bound_eLe}, \ref{lemma_bound_Le}, \ref{lemma_lin_iso} and \ref{lemma_e-1bound}.

Comparing \eqref{strang_comp1} and \eqref{strang_comp2}, we see that the terms of order 0 and 1 in $\tau$ coincide, and those of order~3 and higher are collected in the remainder term $R'''$. For the terms of order 2 we see that
\begin{equation}
\label{strang_err3}
\begin{aligned}
\| \varepsilon^2 e^{-\frac{\tau}{2}\Lel} \Le \big(e^{-\frac{\tau}{2}\Lel} u(t_n)\big) &\Le \big( e^{-\frac{\tau}{2}\Lel} u(t_n) \big)^2 - \varepsilon^2\Le e^{-\frac{\tau}{2}\Lel} \big(e^{-\frac{\tau}{2}\Lel} u(t_n)\big)\Le u^2(t_n) \|_r\\
&= \|\varepsilon^2 e^{-\frac{\tau}{2}\Lel} \Le \big(e^{-\frac{\tau}{2}\Lel} u(t_n)\big) \Le \big( \big( e^{-\frac{\tau}{2}\Lel} u(t_n) \big)^2 -u^2(t_n)\big) \|_r \\
&\leq \|\varepsilon  \big(e^{-\frac{\tau}{2}\Lel} u(t_n)\big) \Le \big( \big( e^{-\frac{\tau}{2}\Lel} u(t_n) \big)^2 -u^2(t_n)\big) \|_{r-1}\\
&\leq \tau  C_r^2 \varepsilon^{1-\sigma} \|u(t_n)\|_{r+\lambda-2\sigma +1},
\end{aligned} 
\end{equation}
where the first and third inequalities follow by Lemmas \ref{lemma_e-1bound} and \ref{lemma_bound_eLe}, the second one by \eqref{bil_est} and Lemma \ref{lemma_lin_iso} and the last inequality follows by Lemma  \ref{lemma_bound_eLe}. Thus, by \eqref{strang_err1}, \eqref{strang_err2} and \eqref{strang_err3}, we can conclude that
\begin{gather}\label{locStrang}
\| \Phi^{\tau}(u(t_n)) - \hat{\Psi}^{\tau}(u(t_n))\|_r \leq \tau^3 \varepsilon^{1-\sigma} K_2,
\end{gather}
for some $K_2=K_2(\|u\|_{r+2\lambda-2\sigma +1})>0$.

\noindent  {\bf Stability analysis.}
Using Lemma \ref{lemma_bound_eLe}, \eqref{lin_iso} and \eqref{bil_est} we obtain
\begin{equation}
\label{strang_s1}
\begin{aligned}
\|\hat{\Psi}(&f(t_n))-\hat{\Psi}(g(t_n))\|_r \leq\, \|f(t_n)-g(t_n)\|_r + \| \tau \varepsilon\Le\big( e^{-\frac{\tau}{2}\Lel} f(t_n) \big)^2 - \tau \varepsilon\Le\big( e^{-\frac{\tau}{2}\Lel} g(t_n) \big)^2\|_r\\
&+\| \tau^2\varepsilon \big(e^{-\frac{\tau}{2}\Lel} f(t_n)\big) \Le \big( e^{-\frac{\tau}{2}\Lel} f(t_n) \big)^2 - \tau^2\varepsilon \big(e^{-\frac{\tau}{2}\Lel} g(t_n)\big) \Le \big( e^{-\frac{\tau}{2}\Lel} g(t_n) \big)^2  \|_r. 
\end{aligned}
\end{equation}
In particular,
\begin{gather}
\label{strang_s2}
\| \tau \big( e^{-\frac{\tau}{2}\Lel} f(t_n) \big)^2 - \tau \big( e^{-\frac{\tau}{2}\Lel} g(t_n) \big)^2\|_r\leq 
 \tau C_r \| f(t_n)  + g(t_n)\|_r \| f(t_n)-   g(t_n)\|_r
\end{gather}
and, similarly, 
\begin{equation}
\label{strang_s3}
\begin{aligned}
\| \tau^2\varepsilon \big(e^{-\frac{\tau}{2}\Lel} f(t_n)\big) \Le \big( e^{-\frac{\tau}{2}\Lel} f(t_n) \big)^2 - \tau^2\varepsilon \big(e^{-\frac{\tau}{2}\Lel} g(t_n)\big) \Le \big( e^{-\frac{\tau}{2}\Lel} g(t_n) \big)^2  \|_r \\
\leq \tau^2 C_r^2\|f(t_n)-g(t_n)\|_r\|f(t_n)\|^2_{r-1} + \tau^2 C_r^2 \|f(t_n)\|_r\| f(t_n)+g(t_n) \|_{r-1}\| f(t_n)-g(t_n) \|_{r-1}.
\end{aligned}
\end{equation}
Collecting our results in \eqref{strang_s1}, \eqref{strang_s2} and \eqref{strang_s3} leads to the following stability estimate
\begin{gather}\label{stabStrang}
\| \hat{\Psi}^{\tau}(f(t_n))-\hat{\Psi}^{\tau}(g(t_n)) \|_r \leq (1+\tau M_2)\| f(t_n)-   g(t_n)\|_r,
\end{gather}
for some $M_2=M_2(\|f\|_r,\|g\|_r,\tau)>0$. The local error estimate \eqref{locStrang} together with the stability bound \eqref{stabStrang} allow us to 
 conclude by a Lady Windermere fan argument  (see \cite{hairer_geometric_2006}).
\end{proof}
\section{A third order Splitting method for BBM}
\subsection{Derivation}
In this section we present a third order splitting scheme which is derived in \cite{ruth_canonical_1983} for the integration of Hamilton's equations and takes the form
\begin{gather}
\label{MET3}
\tilde{\Psi}^{\tau} = \Psi_2^{d_3\tau}\circ\tilde{\Psi}_1^{c_3\tau}\circ\Psi_2^{d_2\tau}\circ\tilde{\Psi}_1^{c_2\tau}\circ\Psi_2^{d_1\tau}\circ\tilde{\Psi}_1^{c_1\tau},
\end{gather}
with the weights
\begin{gather}
\label{weights_met3}
c_1=\frac{7}{24},\, c_2=\frac{3}{4},\, c_3=-\frac{1}{24},\quad d_1=\frac{2}{3},\, d_2=-\frac{2}{3}\,\,\text{and}\,\, d_3=1.
\end{gather}
The linear flow $\Psi_2^t(\cdot)$ is again given by \eqref{Psi2} and we are left with deriving a suitable third order approximation $\tilde{\Psi}_1^t(\cdot)$ to the the nonlinear problem \eqref{S1}.  Taylor series expansion yields that
\begin{align}
\label{duh3}
w(t_n+\tau)=\,&w(t_n)-\varepsilon\Le\int_0^{\tau} w^2(t_n+s)\,\mathrm{d}s\notag\\
=\,&w(t_n)-\varepsilon\Le\int_0^{\tau} \bigg( w^2(t_n) + 2sw(t_n)\partial_tw(t_n) + \frac{s^2}{2} \partial_{t}^2 w^2(t_n) \bigg) \,\mathrm{d}s + R_3(w),
\end{align}
where $R_3(w)$ satisfies the following bound
\begin{align}
\label{R_3}
\|R_3(w)\|_r=\bigg\|\varepsilon\Le \int_0^{\tau} \frac{s^3}{6}\,\mathrm{d}s \partial_{t}^3w^2 (t)_{t=\xi} \bigg\|_r \leq \varepsilon^{1-\sigma}\tau^4 K,
\end{align}
for some $K=K(\sup_{t_n \leq t \leq t_{n+1}}\|w(t)\|_{r+1-2\sigma })>0$, by Lemma \ref{bound_partialw}. 

The expansion \eqref{duh3} motivates us to define the numerical nonlinear flow as follows
\begin{equation}
\label{psi1_met3}
\begin{aligned}
\tilde{\Psi}_1^{\tau}(w^n):=\,&w^n- \tau \varepsilon\Le w^n + \tau^2\varepsilon^2 \Le w^n\Le (w^n)^2 - \frac{\tau^3}{6}\varepsilon^3\Le\big(\Le (w^n)^2\big)^2\\
 &-\frac{2}{3}\tau^3\varepsilon^3\Le w^n\Le w^n\Le (w^n)^2
\end{aligned}
\end{equation}
which together with \eqref{MET3} defines our third order splitting scheme.
\subsection{Error Analysis of the third order splitting scheme}
\label{errorMET3}


\begin{theorem}
Fix $0\leq \sigma \leq 1$  and $r\geq 0$ such that $r+1+3\lambda-2\sigma>1/2$  and assume that the exact solution of \eqref{BBM}  satisfies $u \in \mathcal{C}\left([0,T],H^{r+1+3\lambda-2\sigma}(\mathbb{T})\right)$. Then there exists a  $\tau_0>0$ such that for $\tau\leq\tau_0$ and $t_n\leq T$, the third order splitting scheme $u^n$ defined in \eqref{MET3} allows the global error estimate
\begin{gather*}
\|u(t_n)-u^n\|_r \leq \varepsilon^{1-\sigma} \tau^3 K, 
\end{gather*}
for a constant $K=K(\sup_{t\in[0,T]}\|u(t)\|_{r+1+3\lambda-2\sigma})>0$.
\end{theorem}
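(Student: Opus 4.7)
The plan is to follow the same two-step template used for the Lie and Strang theorems: derive a local error bound of order $\tau^{4}\varepsilon^{1-\sigma}$ for $\|\Phi^{\tau}(u(t_n))-\tilde\Psi^{\tau}(u(t_n))\|_r$, prove a stability estimate of the form $(1+\tau M_3)$ for $\tilde\Psi^{\tau}$, and then conclude global third order convergence by a Lady Windermere's fan argument.

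\textbf{Local error.} I would expand both the exact flow and the numerical flow up to order $\tau^{3}$ and match term by term. For the exact flow I iterate the Duhamel formula \eqref{exact_flow} three times: the first iteration produces an integral of $u^2(t_n+s)$ against $e^{-(\tau-s)\Lel}$; the second iteration replaces $u^2$ inside by $(e^{-s\Lel}u(t_n))^2$ plus a further Duhamel remainder; the third iteration expands the resulting $e^{-(\tau-s)\Lel}$ and $e^{-s\Lel}$ operators and the nested integrals via the approximations $e^{t\Lel}=1+t\Lel+\tfrac12 t^2\Lel^2+\mathcal O(t^3\Lel^3)$, exactly as in the proof of the Strang theorem but carried one order further. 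Each remainder piece is controlled using Lemma \ref{lemma_bound_eLe} to gain a factor $\varepsilon^{1-\sigma}$ at the cost of $1-2\sigma$ derivatives, together with Lemmas \ref{lemma_lin_iso} and \ref{lemma_e-1bound} and the bilinear estimate \eqref{bil_est}; the highest-order differential operator encountered is $\Le^{3}\circ\mathrm{(powers\ of\ }\Lel)$, which explains the regularity requirement $r+1+3\lambda-2\sigma$. For the numerical flow, I expand each factor $e^{-d_j\tau\Lel}$ in its Taylor series around $0$ and compose them with the polynomial-in-$\tau$ expression \eqref{psi1_met3}; the resulting expansion is an explicit polynomial of degree $3$ in $\tau$ with a remainder that is again bounded by $\tau^{4}\varepsilon^{1-\sigma}K$ thanks to Lemma \ref{bound_partialw} and the estimate \eqref{R_3}.

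The crucial matching step is that all terms up to and including order $\tau^{3}$ coincide. This is exactly where the choice of weights \eqref{weights_met3} enters: Ruth's order conditions guarantee that the formal power series obtained from composing $\Psi_2^{d_j\tau}\circ\tilde\Psi_1^{c_j\tau}$ agrees with the Taylor expansion of the exact flow through order three. With the nonlinear numerical flow \eqref{psi1_met3} constructed precisely so that $\tilde\Psi_1^{c_j\tau}$ itself reproduces the Taylor expansion of $\Psi_1^{c_j\tau}$ up to $\mathcal O(\tau^{4})$, the combined composition inherits the order three property. All unmatched pieces carry at least one factor $\tau$ beyond the matched order and one factor $\varepsilon\Le$, which is bounded by $\varepsilon^{1-\sigma}$ by Lemma \ref{lemma_bound_eLe}, giving the desired local bound
\begin{gather*}
\bigl\|\Phi^{\tau}(u(t_n))-\tilde\Psi^{\tau}(u(t_n))\bigr\|_r\leq \tau^{4}\varepsilon^{1-\sigma}K_3\bigl(\sup_{t\in[0,T]}\|u(t)\|_{r+1+3\lambda-2\sigma}\bigr).
\end{gather*}

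\textbf{Stability.} Since the coefficients $d_j$ include negative values, this is where the assumption $\mathrm{Re}\,P(ik)=0$ is essential: by Lemma \ref{lemma_lin_iso} each $e^{d_j\tau\Lel}$ is an $H^r$-isometry regardless of the sign of $d_j$. For the nonlinear flow $\tilde\Psi_1^{c_j\tau}$ I estimate $\tilde\Psi_1^{c_j\tau}(f)-\tilde\Psi_1^{c_j\tau}(g)$ using the bilinear estimate \eqref{bil_est} and Lemma \ref{lemma_bound_eLe} on each of the terms of \eqref{psi1_met3}, paying a factor $1+c_j\tau M$ per stage with $M$ depending on the $H^r$ norms of $f$ and $g$ (uniformly on a bounded set, which we obtain a posteriori via induction as in the previous sections). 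Composing the six stages yields $\|\tilde\Psi^{\tau}(f)-\tilde\Psi^{\tau}(g)\|_r\leq(1+\tau M_3)\|f-g\|_r$.

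\textbf{Conclusion and main obstacle.} Inserting and subtracting $\tilde\Psi^{\tau}(u(t_n))$ in $u(t_{n+1})-u^{n+1}$, combining the local and stability bounds, and applying a Lady Windermere's fan argument as in \cite{hairer_geometric_2006} yields the global error estimate with one power of $\tau$ absorbed into the summation. The main obstacle I anticipate is the bookkeeping in the local error expansion: the composition \eqref{MET3} produces many cross terms of the form $\Le(e^{-d_j\tau\Lel}\cdot)$ applied to products that themselves depend on $\tau$, and one must verify carefully that, after using the order conditions \eqref{weights_met3}, all such terms either cancel pairwise up to order three or are absorbed into an $\mathcal O(\tau^{4}\varepsilon^{1-\sigma})$ remainder without inflating the regularity requirement beyond $r+1+3\lambda-2\sigma$. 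Tracking derivative counts—each $\Le$ costs one derivative, each $\Lel$ costs $\lambda$—shows that three nested flows plus the outer $\varepsilon\Le$ factor yield exactly the stated exponent $1+3\lambda-2\sigma$.
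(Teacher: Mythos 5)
Your overall template (local error of order $\tau^4\varepsilon^{1-\sigma}$, per-stage stability, Lady Windermere) and your stability argument coincide with the paper's, but your local error analysis takes a genuinely different route. You propose a direct Taylor/Duhamel expansion of both the exact flow and the composed numerical flow through order $\tau^3$, matching terms by hand "as in the Strang proof but one order further," and you invoke Ruth's order conditions to assert that the formal series agree. The paper explicitly rejects this as "much more involved" and instead splits the local error into two separately controlled pieces: (i) the pure splitting error between $\Phi^{\tau}$ and the composition of the \emph{exact} subflows $\Phi_2^{d_i\tau}\circ\Phi_1^{c_i\tau}$, which is handled not by expansion but by the exact local error representation of \cite{descombes_exact_2010} in terms of the Lie derivative $D_1=\varepsilon\Le w^2\frac{\partial}{\partial_w}$ and nested commutator integrals $\mathcal{J}_{\pm}^{s}$ — the order conditions enter there, and the remainder visibly contains two factors of $D_1$ (each gaining $\varepsilon^{1-\sigma}$ and regularity via Lemma \ref{lemma_bound_eLe}) and two of $\Lel$ (each costing $\lambda$ derivatives), which is how the exponent $r+1+3\lambda-2\sigma$ is read off; and (ii) the error from replacing $\Phi_1^{c_i\tau}$ by the polynomial flow $\tilde{\Psi}_1^{c_i\tau}$, handled by a telescoping sum whose summands are controlled by the Taylor remainder bound \eqref{R_3} and the boundedness of $e^{\tau D_1}$ and $\tilde{\Psi}_1^{t}$. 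What the paper's approach buys is precisely the elimination of the "main obstacle" you name: one never has to verify pairwise cancellation of the many cross terms, and the $\varepsilon$-versus-regularity bookkeeping is transparent from the commutator structure; it also scales to the fourth-order scheme. Your direct expansion would in principle work, but the step where you pass from the \emph{formal} order conditions to a rigorous $\mathcal{O}(\tau^4\varepsilon^{1-\sigma})$ remainder with the stated regularity is exactly the part left unexecuted in your plan, so if you pursue it you must either carry out that combinatorial verification in full or import the exact local error representation as the paper does.
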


\begin{proof} We use the main tools in the convergence analysis of high order splitting schemes, namely their commutator structure. As the splitting scheme now has three stages, a naive approach analogous to the previous sections becomes much more involved. Therefore, we will exploit the general local error structure of high order splitting methods, see, e.g., \cite{descombes_exact_2010,hairer_geometric_2006}. In our local error analysis it will be in  particular important  to study the gain/loss in regularity and $\varepsilon$. 
For this purpose we will express the exact flow of the nonlinear subproblem \eqref{S1} in terms of its Lie derivative. For an introduction to Lie derivatives, see, e.g.,  \cite{hairer_geometric_2006}.   In case of our   nonlinear subproblem \eqref{S1} the Lie derivative takes the form
\begin{gather*}
D_1=\varepsilon\Le w^2\frac{\partial}{\partial_w}
\end{gather*}
and allows us to express the exact flow $\Phi_1^t(\cdot)$ of  \eqref{S1} by
\begin{gather}
\label{exptD2}
\Phi_1^t(w)=e^{tD_1}w,\quad w\in H^r.
\end{gather}
For details we again refer to \cite{hairer_geometric_2006}. Note that, similarly to  \eqref{lin_iso}, the  operator flow $e^{tD_1}$  is bounded on 
 $H^{r}(\mathbb{T})$, as the Lie derivative $D_1$ is bounded  uniformly in $\varepsilon$ by Lemma \ref{lemma_bound_eLe}. 

\noindent{\bf Local Error analysis.} We add and subtract the splitting scheme given by the weights $c_i,\,d_i,$ $i=1,2,3$ (cf \eqref{weights_met3}), and the exact flows $\Phi_1^t(\cdot)$ and $\Phi_2^t(\cdot)$. This way we find an upper bound for the local error by means of the commutator error and the error $R_3$ given in \eqref{R_3}. More precisely, we obtain 
\begin{equation}
\label{local_error_met3}
\begin{aligned}
\|\Phi^{\tau}(u(t_n)) - \tilde{\Psi}^{\tau}(u(t_n))\|_r \leq\,& \| \Phi^{\tau}(u(t_n)) - \big(\Phi_2^{d_3\tau}\circ\Phi_1^{c_3\tau}\circ\Phi_2^{d_2\tau}\circ\Phi_1^{c_2\tau}\circ\Phi_2^{d_1\tau}\circ\Phi_1^{c_1\tau}\big)(u(t_n))  \|_r\\
 &+ \| \big(\Phi_2^{d_3\tau}\circ\Phi_1^{c_3\tau}\circ\Phi_2^{d_2\tau}\circ\Phi_1^{c_2\tau}\circ\Phi_2^{d_1\tau}\circ\Phi_1^{c_1\tau}\big)(u(t_n)) - \tilde{\Psi}^{\tau}(u(t_n)) \|_r.
\end{aligned}
\end{equation}
Thanks to \cite[Theorem 1]{descombes_exact_2010}, the first term can be expressed as follows
\begin{gather*}
\| \Phi^{\tau}(u(t_n)) - \big(\Phi_2^{d_3\tau}\circ\Phi_1^{c_3\tau}\circ\Phi_2^{d_2\tau}\circ\Phi_1^{c_2\tau}\circ\Phi_2^{d_1\tau}\circ\Phi_1^{c_1\tau}\big)(u(t_n)) \|_r = \bigg\|  \bigg( \int_0^{\tau} \Phi^{\tau-s}\mathcal{R}(s) \,\mathrm{d}s \bigg)(u(t_n)) \bigg\|_r,
\end{gather*}
where the remainder has the form
\begin{align*}
\mathcal{R}(s)= \Phi_2^{d_3\tau}\circ\Phi_1^{c_3\tau}\circ \Omega(s) \circ\Phi_2^{d_2\tau}\circ\Phi_1^{c_2\tau}\circ\Phi_2^{d_1\tau}\circ\Phi_1^{c_1\tau},
\end{align*} 
and, by the choice of $c_i,\,d_i,$ $i=1,2,3$, (which satisfy the order conditions (4.3a) and (4.3b) in \cite{descombes_exact_2010}),  $\Omega$ has the form
\begin{align*}
\Omega(s) = c_1 \mathcal{J}_+^s(d_2D_1,\mathcal{J}_+^{\tilde{s}}(c_2\Lel,\mathcal{J}_+^{\tilde{\tilde{s}}}(d_1D_1,\Lel)))
\end{align*}
with
\begin{align}
\label{J+-}
\mathcal{J}_{\pm}^s(L_1,L_2)=\int_0^s e^{\pm tL_1}[L_1,L_2]e^{\mp t L_1}\,\mathrm{d}t, 
\end{align}
for some linear operators. Note that therefore we have 
four nested commutators in the remainder with two factors of $D_1$ and $\Lel$ respectively. Note that we loose regularity under the action of $\Lel$, by Lemma \ref{lemma_bound_Le}, whereas the action of $D_1$ allows us to gain  in regularity, respectively, powers of $\varepsilon$, see Lemma \ref{lemma_bound_eLe}. This allows a bound of the form
\begin{gather}
\label{K31}
\| \Phi^{\tau}(u(t_n)) -\big(\Phi_2^{d_3\tau}\circ\Phi_1^{c_3\tau}\circ\Phi_2^{d_2\tau}\circ\Phi_1^{c_2\tau}\circ\Phi_2^{d_1\tau}\circ\Phi_1^{c_1\tau}\big)(u(t_n)) \|_r \leq\tau^4 \varepsilon^{1-\sigma}  K_{3,1},
\end{gather}
for some $K_{3,1}=K_{3,1}(\|u(t_n)\|_{r+1+3\lambda-2\sigma})>0$. On the other hand, we need to derive a bound on  the second term in \eqref{local_error_met3}, which arises from the approximations of the exact nonlinear flow $\tilde{\Psi}_1^t(\cdot)$ carried out in \eqref{MET3}. Adding and subtracting the terms
\begin{align*}
\big(\Psi_2^{d_3\tau}\circ\Phi_1^{c_3\tau}\circ\Psi_2^{d_2\tau}\circ\tilde{\Psi}_1^{c_2\tau}\circ\Psi_2^{d_1\tau}\circ\tilde{\Psi}_1^{c_1\tau}\big)(u(t_n))\,\, \text{and}\,\,\big(\Psi_2^{d_3\tau}\circ\Phi_1^{c_3\tau}\circ\Psi_2^{d_2\tau}\circ\Phi_1^{c_2\tau}\circ\Psi_2^{d_1\tau}\circ\tilde{\Psi}_1^{c_1\tau}\big)(u(t_n))
\end{align*}
gives
\begin{gather*}
\| \big(\Phi_2^{d_3\tau}\circ\Phi_1^{c_3\tau}\circ\Phi_2^{d_2\tau}\circ\Phi_1^{c_2\tau}\circ\Phi_2^{d_1\tau}\circ\Phi_1^{c_1\tau}\big)(u(t_n)) - \tilde{\Psi}^{\tau}(u(t_n)) \|_r \\
= \| \big(\Phi_2^{d_3\tau}\circ\Phi_1^{c_3\tau}\circ\Phi_2^{d_2\tau}\circ\Phi_1^{c_2\tau}\circ\Phi_2^{d_1\tau}\circ\Phi_1^{c_1\tau}\big)(u(t_n)) - \big( \Psi_2^{d_3\tau}\circ\tilde{\Psi}_1^{c_3\tau}\circ\Psi_2^{d_2\tau}\circ\tilde{\Psi}_1^{c_2\tau}\circ\Psi_2^{d_1\tau}\circ\tilde{\Psi}_1^{c_1\tau} \big) (u(t_n)) \|_r \\
\leq \| \big(\Phi_2^{d_3\tau}\circ\Phi_1^{c_3\tau}\circ\Phi_2^{d_2\tau}\circ\Phi_1^{c_2\tau}\circ\Phi_2^{d_1\tau}\circ\Phi_1^{c_1\tau}\big)(u(t_n))) - \big(\Psi_2^{d_3\tau}\circ\Phi_1^{c_3\tau}\circ\Psi_2^{d_2\tau}\circ\Phi_1^{c_2\tau}\circ\Psi_2^{d_1\tau}\circ\tilde{\Psi}_1^{c_1\tau}\big)(u(t_n)) \|_r\\
+\| \big(\Psi_2^{d_3\tau}\circ\Phi_1^{c_3\tau}\circ\Psi_2^{d_2\tau}\circ\Phi_1^{c_2\tau}\circ\Psi_2^{d_1\tau}\circ\tilde{\Psi}_1^{c_1\tau}\big)(u(t_n)) - \big(\Psi_2^{d_3\tau}\circ\Phi_1^{c_3\tau}\circ\Psi_2^{d_2\tau}\circ\tilde{\Psi}_1^{c_2\tau}\circ\Psi_2^{d_1\tau}\circ\tilde{\Psi}_1^{c_1\tau}\big)(u(t_n)) \|_r\\
+\| \big(\Psi_2^{d_3\tau}\circ\Phi_1^{c_3\tau}\circ\Psi_2^{d_2\tau}\circ\tilde{\Psi}_1^{c_2\tau}\circ\Psi_2^{d_1\tau}\circ\tilde{\Psi}_1^{c_1\tau}\big)(u(t_n)) - \big(\Psi_2^{d_3\tau}\circ\tilde{\Psi}_1^{c_3\tau}\circ\Psi_2^{d_2\tau}\circ\tilde{\Psi}_1^{c_2\tau}\circ\Psi_2^{d_1\tau}\circ\tilde{\Psi}_1^{c_1\tau}\big)(u(t_n)) \|_r \\
\leq K' \| \big(\Phi_1^{c_3\tau}\circ\Phi_2^{d_2\tau}\circ\Phi_1^{c_2\tau}\circ\Phi_2^{d_1\tau}\big)\big(R_3(u(t_n))\big)\|_r
+ K' \| \big(\Phi_1^{c_3\tau}\circ\Phi_2^{d_2\tau}\big)\big(R_3(\Phi_1^{d_1\tau}\circ\tilde{\Psi}_1^{c_1\tau}(u(t_n))\big)\|_r\\
+K' \| R_3(\Phi_1^{d_2\tau}\circ\tilde{\Psi}_1^{c_2\tau}\circ\Phi_1^{d_1\tau}\circ\tilde{\Psi}_1^{c_1\tau}(u(t_n))\|_r,
\end{gather*}
 for some $K'>0$ chosen such that $\| e^{\tau D_1}g\|_r\leq K' \|g\|_r$ holds for all $g\in H^r$. In addition, the last inequality follows by definition of $R_3$, see \eqref{R_3}. Thanks to \eqref{bil_est} and Lemma~\ref{lemma_bound_eLe} we can bound  $\tilde{\Psi}_1^{t}(\cdot)$ defined in \eqref{psi1_met3} for all $t\in \mathbb{R}$ as follows
\begin{align*}
\| \tilde{\Psi}_1^{t}f\|_r &\leq \|f\|_r+\tau\|f\|_{r-1}+\tau^2C_r\|f\|_{r-1}\|f^2\|_{r-1} + \frac{\tau^3}{6}C_r\|f^2\|_{r-2}^2 + \frac{2}{3}\tau^3C_r^2\|f\|_{r-1}\|f\|_{r-2}\|f\|_{r-3}\\
&\leq K,
\end{align*}
for some $K=K(\|f\|_{r})>0$.  We thus obtain the following bound, using \eqref{R_3} and Lemma \ref{lemma_lin_iso},
\begin{gather}
\label{K32}
\| \big(\Phi_2^{c_1\tau}\circ\Phi_1^{d_1\tau}\circ\Phi_2^{c_2\tau}\circ\Phi_1^{d_2\tau}\circ\Phi_2^{c_3\tau}\circ\Phi_1^{d_3\tau}\big)(u(t_n)) - \tilde{\Psi}^{\tau}(u(t_n)) \|_r \leq \tau^4 \varepsilon^{1-\sigma}K_{3,2},
\end{gather}
for some $K_{3,2}=K_{3,2}(\|u(t_n)\|_{r+1+3\lambda -2\sigma})>0$. Finally, plugging \eqref{K31} and \eqref{K32} into \eqref{local_error_met3} we obtain the total local error bound
\begin{align}\label{loc33}
\|\Phi^{\tau}(u(t_n)) - \tilde{\Psi}^{\tau}(u(t_n))\|_r \leq \tau^3 \varepsilon^{1-\sigma} (K_{3,1}+K_{3,2}).
\end{align}

\noindent{\bf Stability analysis.} We first prove a stability estimate for $\tilde{\Psi}_1^t(\cdot)$. For $i\in\{1,2,3 \}$ we have
\begin{align*}
\|\tilde{\Psi}_1^{c_i\tau}f(t_n)-\tilde{\Psi}_1^{c_i\tau}g(t_n)\|_r \leq\,&\|   f(t_n)-g(t_n)\|_r + \tau \|\varepsilon\Le (f(t_n)-g(t_n))\|_r\\
&+\tau^2\|\varepsilon^2\Le f(t_n)\Le f(t_n)^2 - \varepsilon^2\Le g(t_n)\Le g(t_n)^2\|_r\\
&+\frac{\tau^3}{6}\|\varepsilon^2\Le \big(\Le f(t_n)^2\big)^2-\varepsilon^3\Le \big(\Le g(t_n)^2\big)^2\|_r\\
&+\frac{2}{3}\tau^3\| \varepsilon^2\Le f(t_n) \Le f(t_n)^2- \varepsilon^2\Le g(t_n) \Le g(t_n)^2\|_r\\
\leq\,& (1+\tau M_3')\|f(t_n)-g(t_n)\|_r,
\end{align*}
for some $M_3'=M_3'(\|f\|_r,\|g\|_r)>0$. Note that the last inequality follows by \eqref{bil_est} and Lemma \ref{lemma_bound_Le}. Iterating this argument one obtains, using Lemma \ref{lemma_lin_iso},
\begin{align*}
\|\tilde{\Psi}^{\tau}(f(t_n))-\tilde{\Psi}^{\tau}(g(t_n))\|_r 
&= \| \big(  \Psi_2^{d_3\tau}\circ\tilde{\Psi}_1^{c_3\tau}\circ\Psi_2^{d_2\tau}\circ\tilde{\Psi}_1^{c_2\tau}\circ\Psi_2^{d_1\tau}\circ\tilde{\Psi}_1^{c_1\tau} \big) (f(t_n)-g(t_n))\|_r\\
&\leq (1+\tau M_3') \| \big(  \Psi_2^{d_2\tau}\circ\tilde{\Psi}_1^{c_2\tau}\circ\Psi_2^{d_1\tau}\circ\tilde{\Psi}_1^{c_1\tau} \big) (f(t_n)-g(t_n)) \|_r \\
&\leq (1+\tau M_3')^2 \| \big(  \Psi_2^{d_1\tau}\circ\tilde{\Psi}_1^{c_1\tau} \big) (f(t_n)-g(t_n)) \|_r \\
&\leq (1+\tau M_3')^3 \|f(t_n)-g(t_n)\|_r \\
&\leq (1+\tau M_3) \|f(t_n)-g(t_n)\|_r,
\end{align*}
for $M_3:=3M_3' + 3\tau M_3'^2 + \tau^2M_3'^3>0$.

The above stability estimate together with the local error estimate \eqref{loc33} allows us to conclude by a Lady Windermere's fan argument  (see \cite{hairer_geometric_2006}).
\end{proof}

\section{A fourth order Splitting method for BBM}\label{sec:four}
\subsection{Derivation}
In this section we present a fourth order splitting scheme given by
\begin{gather}
\label{MET4}
\check{\Psi}^{\tau} = \Psi^{d_4\tau}_2\circ\check{\Psi}^{c_4\tau}_1\circ\Psi^{d_3\tau}_2\circ\check{\Psi}^{c_3\tau}_1\circ\Psi^{d_2\tau}_2\circ\check{\Psi}^{c_2\tau}_1\circ\Psi^{d_1\tau}_2\circ\check{\Psi}^{c_1\tau}_1,
\end{gather}
with
\begin{gather*}
c_1=0,\quad c_2=c_4=\sigma_1=\frac{1}{2-\sqrt[3]{2}},\quad c_3=\sigma_2=\frac{-\sqrt[3]{2}}{2-\sqrt[3]{2}},\\
d_1=d_4=\frac{1}{2}\sigma_1,\quad d_2=d_3=\frac{1}{2}(\sigma_1+\sigma_2).
\end{gather*}
For details on the order condition, we refer to   \cite{hairer_geometric_2006}. 

Again it remains to construct a  fourth order integrator for the nonlinear supbroblem \eqref{S1}. For this purpose we expand the corresponding  mild formulation up to order four
\begin{align}
\label{duh4}
w(t_n+\tau)=\,&w(t_n)-\varepsilon\Le\int_0^{\tau} w^2(t_n+s)\,\mathrm{d}s\notag\\
=\,&w(t_n)-\varepsilon\Le\int_0^{\tau} \bigg( w^2(t_n) + 2sw(t_n)\partial_tw(t_n) + \frac{s^2}{2} \partial_{t}^2 w^2(t_n) +\frac{s^3}{6}\partial_{t}^3 w^2(t_n) \bigg) \,\mathrm{d}s + R_4(w),
\end{align}
where $R_4(w)$, for some $\xi\in [t_n,t_{n+1}]$, satisfies the following bound
\begin{align}
\label{R_4}
\|R_4(w)\|_r=\bigg\| \varepsilon\Le \int_0^{\tau} \frac{s^4}{24}\,\mathrm{d}s\,\, \partial_{t}^4 w^2 (t)_{t=\xi} \bigg\|_r \leq \varepsilon^{1-\sigma}\tau^5 K,
\end{align}
for some $K=K(\sup_{t_n \leq t \leq t_{n+1}}\|w(t)\|_{r+1-2\sigma })>0$, by Lemma \ref{bound_partialw}. 

The above expansion motivates us to define the following numerical flow \begin{equation}
\begin{aligned}
\check{\Psi}_1^{\tau}(w^n) :=\,&w^n- \tau \varepsilon\Le w^n + \tau^2\varepsilon^2 \Le w^n\Le (w^n)^2 - \frac{1}{6}\tau^3\varepsilon^3\Le\big(\Le (w^n)^2\big)^2 \\ &-\frac{2}{3}\tau^3\varepsilon^3\Le w^n\Le w^n\Le (w^n)^2 
+\frac{1}{2}\tau^4 \varepsilon^4 \Le \big(\Le (w^n)^2  \big)\big(\Le w^n\Le (w^n)^2  \big)\\ &+ \frac{1}{6} \tau^4 \varepsilon^4 \Le w^n\Le \big(\Le (w^n)^2  \big)^2
+ \frac{1}{3}\tau^4\varepsilon^4 \Le w^n \Le w^n \Le w^n \Le (w^n)^2.
\end{aligned}\label{4n}
\end{equation}
Similarly to the convergence analysis of the third order splitting scheme of Section \ref{errorMET3} we obtain the following fourth order convergence result.
\begin{theorem}
Fix $0\leq \sigma \leq 1$  and $r\geq 0$ such that $r+1+3\lambda-2\sigma>1/2$  and assume that the exact solution of \eqref{BBM}  satisfies $u \in \mathcal{C}\left([0,T],H^{r+1+2\lambda-2\sigma}(\mathbb{T})\right)$. Then there exists a  $\tau_0>0$ such that for $\tau\leq\tau_0$ and $t_n\leq T$ the fourth order splitting $u^n$ defined in \eqref{MET4} (together with \eqref{4n}) satisfies the global error estimate
\begin{gather*}
\|u(t_n)-u^n\|_r \leq \varepsilon^{1-\sigma} \tau^4 K, 
\end{gather*}
for a constant $K=K(\sup_{t\in[0,T]}\|u(t)\|_{r+1+4\lambda -2\sigma})>0$. 
\end{theorem}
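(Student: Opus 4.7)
The plan is to mirror the local error/stability/Lady Windermere decomposition used for the third order scheme, adapted to the five nonlinear stages of \eqref{MET4} and the higher order truncated flow \eqref{4n}. As in \eqref{local_error_met3}, I would first insert the composition of exact subflows and split
\begin{align*}
\|\Phi^{\tau}(u(t_n)) - \check{\Psi}^{\tau}(u(t_n))\|_r \leq\;& \|\Phi^{\tau}(u(t_n)) - (\Phi_2^{d_4\tau}\circ\Phi_1^{c_4\tau}\circ\cdots\circ\Phi_2^{d_1\tau}\circ\Phi_1^{c_1\tau})(u(t_n))\|_r \\
&+ \|(\Phi_2^{d_4\tau}\circ\Phi_1^{c_4\tau}\circ\cdots\circ\Phi_2^{d_1\tau}\circ\Phi_1^{c_1\tau})(u(t_n)) - \check{\Psi}^{\tau}(u(t_n))\|_r.
\end{align*}
The first term is the pure commutator defect. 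Using \cite[Theorem 1]{descombes_exact_2010} together with the fourth order conditions satisfied by the weights $c_i,d_i$, this defect is a time integral involving five nested Jacobi-type brackets $\mathcal{J}_{\pm}^s$ (as in \eqref{J+-}) built from $D_1$ and $\mathcal{L}_{\varepsilon,\lambda}$. Each occurrence of $\mathcal{L}_{\varepsilon,\lambda}$ costs $\lambda$ derivatives by Lemma \ref{lemma_bound_Le}, whereas each occurrence of $D_1$ contributes a factor $\varepsilon^{1-\sigma}$ at the price of $1-2\sigma$ derivatives by Lemma \ref{lemma_bound_eLe}. Balancing one $D_1$ factor against four $\mathcal{L}_{\varepsilon,\lambda}$ factors gives the bound $\tau^5\,\varepsilon^{1-\sigma}K_{4,1}$ with $K_{4,1}=K_{4,1}(\|u(t_n)\|_{r+1+4\lambda-2\sigma})$.

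For the second part I would telescope by replacing $\Phi_1^{c_i\tau}$ by $\check{\Psi}_1^{c_i\tau}$ one stage at a time, picking up four differences of the form
\begin{equation*}
\|(\Psi_2^{d_4\tau}\circ\check{\Psi}_1^{c_4\tau}\circ\cdots\circ\Psi_2^{d_{i+1}\tau}\circ(\Phi_1^{c_i\tau}-\check{\Psi}_1^{c_i\tau})\circ\Psi_2^{d_i\tau}\circ\check{\Psi}_1^{c_{i-1}\tau}\circ\cdots)(u(t_n))\|_r.
\end{equation*}
Since $\Psi_2^{d_j\tau}$ is an isometry on $H^r$ by Lemma \ref{lemma_lin_iso}, and since $\check{\Psi}_1^{c_j\tau}$ is bounded on $H^r$ uniformly in $\varepsilon$ via the bilinear estimate \eqref{bil_est} and Lemma \ref{lemma_bound_eLe} (every term in \eqref{4n} is of the form $\tau^k\varepsilon^k$ times a product of $\mathcal{L}_\varepsilon$'s so that each $\varepsilon\mathcal{L}_\varepsilon$ is uniformly bounded on $H^r$), these outer operators can be absorbed into a constant. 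What remains is $\|\Phi_1^{c_i\tau}(v)-\check{\Psi}_1^{c_i\tau}(v)\|_r\leq \|R_4(v)\|_r$ for $v$ a bounded function of $u(t_n)$ in $H^{r+1-2\sigma}$; by \eqref{R_4} this is $\lesssim \tau^5\varepsilon^{1-\sigma}$. Combined with the commutator bound, this yields a total local error of order $\tau^5\varepsilon^{1-\sigma}$ with constant depending on $\sup_{[0,T]}\|u\|_{r+1+4\lambda-2\sigma}$.

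For stability, I would prove first the one-stage estimate $\|\check{\Psi}_1^{c_i\tau}(f)-\check{\Psi}_1^{c_i\tau}(g)\|_r\leq (1+\tau M_4')\|f-g\|_r$ by writing each of the seven terms in \eqref{4n} as a polynomial in $\varepsilon\mathcal{L}_\varepsilon$ acting on products of $f$ or $g$, applying the bilinear estimate \eqref{bil_est} and Lemma \ref{lemma_bound_eLe} to the difference, with $M_4'$ depending only on $\|f\|_r,\|g\|_r$. Iterating through the four nonlinear stages and using Lemma \ref{lemma_lin_iso} for the interspersed $\Psi_2^{d_j\tau}$ gives $\|\check{\Psi}^{\tau}(f)-\check{\Psi}^{\tau}(g)\|_r\leq (1+\tau M_4)\|f-g\|_r$. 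A standard Lady Windermere's fan argument \cite{hairer_geometric_2006} combining local error and stability then produces the claimed global bound.

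The main obstacle is the bookkeeping in the commutator term: one must check that the fourth order conditions cancel all brackets with fewer than five factors, and then verify that among the surviving five-factor brackets there is always at least one $D_1$ to activate the $\varepsilon^{1-\sigma}$ gain, so that no bracket produces a $\lambda$-derivative loss without a compensating smoothing. This is precisely the combinatorial step that, for the third order method, gave the regularity threshold $r+1+3\lambda-2\sigma$; for the fourth order method the analogous analysis produces the threshold $r+1+4\lambda-2\sigma$ stated in the theorem's error constant.
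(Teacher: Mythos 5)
Your proposal is correct and follows essentially the same route as the paper, which itself omits the details and states that the proof proceeds by the same arguments as for the third order scheme: the Descombes--Thalhammer local error representation for the commutator defect, a telescoping replacement of $\Phi_1^{c_i\tau}$ by $\check{\Psi}_1^{c_i\tau}$ controlled by $R_4$, a stagewise Lipschitz bound for stability, and Lady Windermere's fan. The only cosmetic slip is the count of nonlinear stages (the composition \eqref{MET4} has four $\check{\Psi}_1$ factors, one of which is trivial since $c_1=0$), which does not affect the argument.
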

\begin{proof}
The proof  follows with similar arguments as given in Section \ref{errorMET3} for the third order method and will be omitted here.
\end{proof}

\section{Numerical Experiments}\label{sec:num}
In this section we underline our theoretical convergence result \eqref{result} with numerical experiments. In particular we observe that our splitting schemes convergence with desired order $\mathcal{O}(\tau^p \varepsilon)$ for $p=1,2,3,4$. For the spatial discretisation we employ  a standard Fourier pseudospectral method. More specifically, we choose the highest Fourier mode to be $M=200$, which corresponds to $\Delta x \approx 0.0314$ and integrate the following initial value up to time $T=5$
\begin{gather*}
u(0,x)= \frac{3 \sin(2x)}{2-\cos(x)}.
\end{gather*}

In Figure \ref{EOC_plot} we plot the time-step size versus the discrete $L^2$ error of the first~\eqref{MET1}, second \eqref{MET2}, third \eqref{MET3} and fourth \eqref{MET4} order splitting scheme  for different values of $\varepsilon$. In order to generate this result we used the method itself with a step size of $\tau = 10^{-15}$ as a reference solution, after comparing its accuracy with an approximation via the ode45 solver, integrated in Matlab with a very fine time step size.

 \begin{figure}[h!]
  \includegraphics[width=0.65\textwidth]{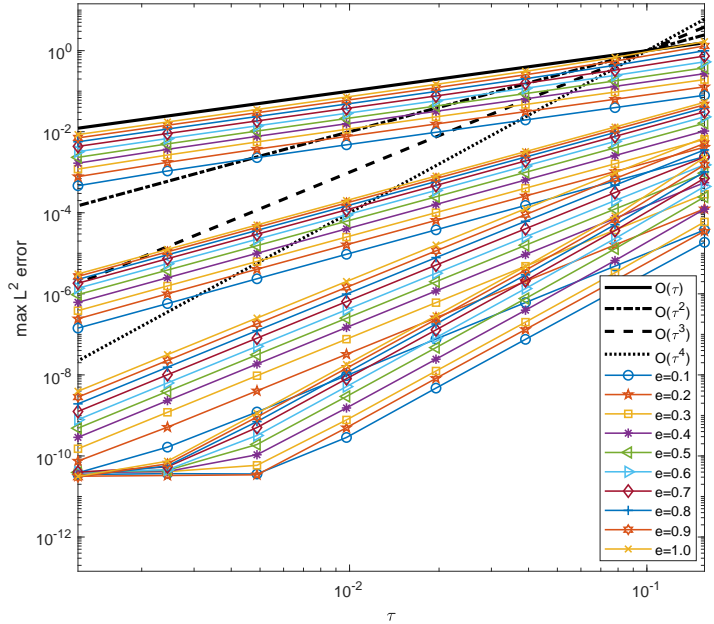}
  \caption{Order plots of the Lie splitting \eqref{MET1}, the Strang splitting \eqref{MET2}, a third order splitting \eqref{MET3} and fourth order splitting \eqref{MET4} for different values of $\varepsilon = 0.1,0.2,0.3,0.4,0.5,0.6,0.7,0.8,0.9,1$. The numerical experiment underlines the theoretical observed convergence  rate $\tau^p \varepsilon$.}
  \label{EOC_plot}
\end{figure}

In Figure \ref{ACKdV_plot} we furthermore underline the asymptotic convergence of our splitting method to the  the   KdV limit equation \eqref{kdvapp}. For this purpose we plot the difference between the Lie splitting solution of the BBM  equation \eqref{BBM} and the  numerical solution of the  KdV equation \eqref{kdvapp} for different values of $\varepsilon$. The numerical experiment underlines the asymptotic convergence of order $\mathcal{O}(\varepsilon)$.
 
 \begin{figure}[h!]
  \includegraphics[width=0.5\textwidth]{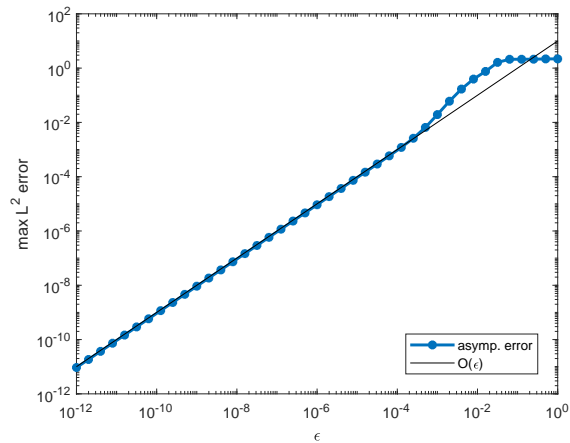}
  \caption{Asymptotic convergence of   BBM to KdV for the Lie splitting method. The numerical experiment underlines the asymptotic convergence  rate $\varepsilon$}
  \label{ACKdV_plot}
\end{figure}

\subsection*{Acknowledgements}

{
This project has received funding from the European Research Council (ERC) under the European Union’s Horizon 2020 research and innovation programme (grant agreement No. 850941).
}

\nocite{*}
\bibliographystyle{abbrv}
\bibliography{BBM_final.bib}

\end{document}